\newtheorem{theorem}{Theorem}[section]
\newtheorem{lemma}[theorem]{Lemma}
\newtheorem{proposition}[theorem]{Proposition}
\newtheorem{corollary}[theorem]{Corollary}
  \newtheorem{example}[theorem]{Example}
  \newtheorem{remark}[theorem]{Remark}
\newenvironment{proof}{	   
  \noindent
  \textbf{Proof.}}{
  \hfill $\Box$
  \vspace{3mm}
}
\numberwithin{equation}{section}
\newcommand{\N}{\mathbb{N}} 
\newcommand{\C}{\mathbb{C}} 
\newcommand{\D}{\mathbb{D}} 
\def\re{{\mathrm Re\,}}
\def\({\left(}       \def\){\right)}
\begin{document}

\title{A note on completeness of weighted normed spaces of analytic functions}

\author{Jos\'{e} Bonet and Dragan Vukoti\'c}

\date{{\small Dedicated to the memory of Pawe{\l} Doma\'nski (1959--2016)}}

\maketitle

\begin{abstract}
Given a non-negative weight $v$, not necessarily bounded or strictly positive, defined on a domain $G$ in the complex plane, we consider the weighted space $H_v^\infty(G)$ of all holomorphic functions on $G$ such that the product $v|f|$ is bounded in $G$ and study the question of when such a space is complete under the canonical sup-seminorm. We obtain both some necessary and some sufficient conditions in terms of the weight $v$, exhibit several relevant examples, and characterize completeness in the case of spaces with radial weights on balanced domains.
\end{abstract}

\vspace{.4cm}

\noindent \textbf{Authors' addresses:} \\

\noindent J.\ Bonet: Instituto Universitario de Matem\'{a}tica Pura y
Aplicada IUMPA,
\newline
Universitat Polit\`ecnica de Val\`encia,
\newline
E-46022 Valencia,
Spain
\newline
E-mail: jbonet@mat.upv.es \\

\noindent D.\ Vukoti\'c: Departamento de Matem\'aticas
\newline
Universidad Aut\'onoma de Madrid,
\newline
Facultad de Ciencias, M\'odulo 17
\newline
E-28049 Madrid, Spain
\newline
E-mail: dragan.vukotic@uam.es

\renewcommand{\thefootnote}{}
\footnotetext{\emph{2010 Mathematics Subject Classification.}
Primary: 46E15}%
\footnotetext{\emph{Key words and phrases.} Weighted Banach spaces, holomorphic functions.}%

\newpage

\section{Introduction, Notation, and Motivation}

In this paper, as is usual, by a \textit{planar domain\/} we mean an open connected set in the complex plane $\C$. A \textit{weight} $v$ on a domain $G$ is a non-negative function $v:G \rightarrow [0,\infty[$. In general, it is not required that $v$ be bounded or strictly positive. Denote by $H(G)$ the algebra of all holomorphic (analytic) functions on $G$ and by $\tau_{co}$ the topology of uniform convergence on the compact subsets of $G$ (often also called the compact-open topology). The space $(H(G),\tau_{co})$ is a metrizable and complete locally convex space, \textit{i.e.\/}, a Fr\'echet space.
\par
The \textit{weighted space of holomorphic functions} $H^\infty_v(G)$ associated with $v$ is defined by
\begin{center}
 $H^\infty_v(G) := \{ f \in H(G) \ | \ \|f\|_v = \sup_{z \in G} v(z)
 |f(z)| < +\infty \}$
\end{center}
and is endowed with the natural \textit{seminorm} $\Vert f \Vert_v := \sup_{z \in G} v(z) |f(z)|.$ Spaces of this type, when $v$ is strictly positive and continuous, appear in the study of growth conditions of
analytic functions and have been investigated in various articles since the work of Shields and Williams; \textit{cf.\/} \cite{BBG}, \cite{BBT}, \cite{BDL}, \cite{L0}, \cite{L2}, \cite{SW} and the references therein.
\par
If $v$ is the constant function $1$, then $H^\infty_v(G)$ obviously coincides with the space $H^\infty(G)$ of all bounded holomorphic functions on $G$ endowed with the sup-norm $\Vert\cdot\Vert_\infty$.
In fact, in most cases considered in the literature, $v$ is continuous and strictly positive. In this case it is easy to check that the above weighted space is complete. It might be somewhat less obvious that if this is not required of $v$, then the space may fail to be complete (or even normed!), as will be seen in the examples given in this paper.
\par
The problem we consider in this note is the following: \textit{When
is the space $H^\infty_v(G)$ complete\/}? In other words, when is it a Banach space? We look for explicit conditions expressed in terms of the weight $v$. This is closely related to the question of boundedness of point evaluations. Proposition~\ref{prop01}, whose content should be intuitively clear to experts, gives a complete functional analytic characterization. Another general characterization, as one of the main results in the paper, is provided by Theorem~\ref{thm2}; it says that a bounded weight yields a complete space if and only if it can be replaced by a more regular weight that generates the same space. A natural Fr\'echet topology on the space $H^\infty_v(G)$, suggested to us by the referee,  is investigated in Proposition~\ref{proptoptau}. Several necessary as well as sufficient conditions, and also some relevant concrete examples, are given by Propositions~\ref{prop4improved}, \ref{prop3},  \ref{proplocalholes} and Corollaries \ref{cor2}, and \ref{cor4} and by Propositions~\ref{prop7} and \ref{prop6}. However, we are presently not able to give a complete intrinsic characterization of all weights $v$ for which $H^\infty_v(G)$ is complete.
\par
The situation is similar in other related function spaces. For example, the completeness of weighted Bergman spaces was studied by Arcozzi and Bj\"orn \cite{AZ}. They obtained complete characterizations when the weight $v(z) = \chi_E(z), z \in G,$ is the characteristic function $\chi_E$ of a subset $E$ of $G$ in \cite[Theorem 2.1]{AZ}. Partial results concerning weighted Bergman spaces $A^p_{\mu}(G), 1 \leq p < \infty,$ for a positive Borel measure $\mu$ on $G$ are given in \cite[Section~5]{AZ}. This research was taken up by Bj\"orn in a different direction \cite{Bj}. The closely related question of completeness of weighted Bloch spaces was investigated by Nakazi \cite{N}.
\par

\section{Functional analytic approach}\label{sect1}
We begin this section with some basic results. Our approach is based on functional analysis. Given a weight $v$ on $G$, throughout the paper we will use the following notation:
$$
 E_v := \{ z \in G \ | \ v(z) > 0 \}.
$$
For most of the ``reasonable'' weights our weighted space is complete and one certainly expects it to be at least normed. However, even this is not always the case.
\begin{proposition}\label{prop1} Let $v: G \rightarrow [0,\infty[$ be a weight on a planar domain $G$. Then the space $H^\infty_v(G)$ is normed if and only if $E_v$ is not a discrete set (that is, it has a limit point in $G$).
\end{proposition}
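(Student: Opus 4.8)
The plan is to reduce the statement to the uniqueness (identity) principle for analytic functions together with the Weierstrass theorem on prescribed zeros. First I would record the basic reformulation. Since $v \geq 0$, the canonical seminorm satisfies $\|f\|_v = \sup_{z \in G} v(z)|f(z)| = 0$ exactly when $v(z)|f(z)| = 0$ for every $z \in G$; because $v(z) > 0$ precisely on $E_v$, this holds if and only if $f$ vanishes at every point of $E_v$. Thus $\|\cdot\|_v$ is a norm on $H^\infty_v(G)$ if and only if the only $f \in H(G)$ that vanishes on all of $E_v$ is $f \equiv 0$. This turns the analytic-seminorm question into a purely function-theoretic one about zero sets.

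For the direction asserting that a nondiscrete $E_v$ yields a norm, suppose $E_v$ has a limit point in $G$. If $f \in H(G)$ vanishes on $E_v$, then its zero set has an accumulation point inside the connected domain $G$, so the identity principle forces $f \equiv 0$. Hence the seminorm separates points and is a norm. This is the routine half.

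For the converse I would argue by contraposition: assuming $E_v$ is discrete in $G$ (has no limit point in $G$), I would exhibit a nonzero $f \in H(G)$ vanishing on $E_v$, which then has $\|f\|_v = 0$ and defeats the norm property. When $E_v$ is finite (in particular empty, the case $v \equiv 0$), a polynomial with zeros at the finitely many points of $E_v$ suffices. In general, a subset of $G$ with no accumulation point in $G$ is at most countable, and by the Weierstrass factorization theorem, valid for an arbitrary domain in $\C$, there exists a nonzero holomorphic function on $G$ whose zero set contains every point of $E_v$. Such a function witnesses that $\|\cdot\|_v$ is only a seminorm.

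The main obstacle is precisely this last construction: producing a nonzero holomorphic function on a \emph{general} planar domain $G$, not merely on $\C$ or a disk, with prescribed zeros along the discrete set $E_v$. I would handle it by invoking the Weierstrass theorem in the form available on arbitrary open subsets of $\C$ (equivalently, using that every such open set is a domain of holomorphy), rather than reproving the infinite-product construction. Once this existence is granted, the two directions combine immediately via the reformulation above to give the stated equivalence.
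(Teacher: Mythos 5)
Your proposal is correct and follows essentially the same route as the paper: the identity principle gives the ``nondiscrete $\Rightarrow$ norm'' direction, and the Weierstrass theorem on prescribed zeros for an arbitrary planar domain gives the converse. The extra reformulation and the separate treatment of finite $E_v$ are harmless elaborations of the same argument.
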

\begin{proof}
If $E_v$ has a limit point in $G$, then the seminorm $\Vert . \Vert_v$ is a norm by the uniqueness principle for holomorphic functions. Conversely, if $E_v$ does not have a limit point in $G$, we can apply the Weierstrass interpolation theorem (see, \textit{e.g.\/} \cite[Theorem~3.3.1]{BG}) to produce a non-zero holomorphic function $f \in H(G)$ such that $f(z)=0$ for each $z \in E_v$. Then $\Vert f \Vert_v = 0$ and $f \neq 0$, hence $\Vert \cdot \Vert_v$ is not a norm.
\end{proof}
\par
Given a seminormed space $(X,p)$, the associated normed space is defined by $(\tilde{X},\tilde{p}):=(X/ker(p), \tilde{p})$, with $\tilde{p}(x+ ker(p)):=p(x)$, which is easily seen to be a well-defined norm on $X/ker(p)$.
\begin{proposition}\label{prop1bis} Let $v: G \rightarrow [0,\infty[$ be a weight on a planar domain $G$. If the set $E_v$ defined above does not have a limit point in $G$, then the normed space associated with $H^\infty_v(G)$ is isomorphically isometric to a weighted Banach $\ell_{\infty}$ space.
\end{proposition}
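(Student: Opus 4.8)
The plan is to realize the associated normed space concretely as a weighted sequence space via the restriction map to $E_v$. Since $E_v$ has no limit point in $G$, it is a relatively closed, discrete subset of $G$; as $G$ is $\sigma$-compact and a discrete set meets each compact subset in only finitely many points, $E_v$ is at most countable. I would enumerate its points as $\{z_n : n \in I\}$ for an at most countable index set $I$ and set $w_n := v(z_n) > 0$. The natural candidate for the target is the weighted Banach $\ell_\infty$ space $\ell_\infty(I,w) := \{ (a_n)_{n \in I} : \sup_{n} w_n |a_n| < \infty \}$ equipped with the weighted sup-norm, and I would define the restriction operator $T : H^\infty_v(G) \to \ell_\infty(I,w)$ by $Tf := (f(z_n))_{n \in I}$.

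Next I would verify that $T$ is a well-defined linear map that transports the seminorm to the $\ell_\infty$-norm. Because $v$ vanishes off $E_v$, one has $\|f\|_v = \sup_{z \in G} v(z)|f(z)| = \sup_{n} w_n |f(z_n)| = \|Tf\|_{\ell_\infty(I,w)}$; in particular $Tf$ indeed lies in $\ell_\infty(I,w)$. Moreover $\ker T = \{ f : f(z_n) = 0 \text{ for all } n \in I \} = \{ f : \|f\|_v = 0 \} = \ker(\|\cdot\|_v)$, so $T$ factors through the quotient to give a well-defined linear map $\tilde T$ on $\tilde X = H^\infty_v(G)/\ker(\|\cdot\|_v)$, and the preceding norm identity shows $\tilde T$ is isometric, hence injective.

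It then remains to prove that $\tilde T$ (equivalently $T$) is surjective, and this is the crux of the argument. Given any $(a_n)_{n \in I} \in \ell_\infty(I,w)$, I would invoke the Weierstrass interpolation theorem (as in the proof of Proposition~\ref{prop1}, \cite[Theorem~3.3.1]{BG}), whose hypothesis is exactly that $\{z_n\}$ has no accumulation point in $G$, to obtain $f \in H(G)$ with $f(z_n) = a_n$ for every $n$. The one point needing attention is membership in the space, but $\|f\|_v = \sup_n w_n |f(z_n)| = \sup_n w_n |a_n| < \infty$ by the choice of $(a_n)$, so $f \in H^\infty_v(G)$ and $Tf = (a_n)$. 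This exhibits $\tilde T$ as an isometric isomorphism of $\tilde X$ onto $\ell_\infty(I,w)$. The main obstacle is precisely this surjectivity step, which is what forces the appeal to interpolation; by contrast the norm identity and the kernel computation are essentially formal once the restriction map has been set up.
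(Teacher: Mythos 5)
Your argument is correct and coincides with the paper's own proof: both realize the associated normed space via the restriction map $f \mapsto (f(z_n))_n$ to the discrete set $E_v$, verify the isometry and the kernel identification, and obtain surjectivity from the Weierstrass interpolation theorem. The only (harmless) difference is that you make explicit the countability of $E_v$ and the passage to the quotient, which the paper leaves implicit.
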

\begin{proof}
If $E_v$ does not have  a limit point in $G$, then it is a discrete sequence in $G$. Let us write $E_v:=\{z_n \}_n$ and define $w(n):=v(z_n), n \in \N,$ $w:=(w(n))_n$ and $\ell_{\infty}(w):=\{x=(x_n)_n \in \C^{\N} \ | \ ||x||_w:=\sup_{n \in \N} w(n)|x_n| < \infty \}$. The linear map $\Phi: H^\infty_v(G) \rightarrow \ell_{\infty}(w)$ given by
$\Phi(f):=(f(z_n))_n$ is well defined, satisfies $||\Phi(f)||_w = ||f||_v$ for each $f \in H^\infty_v(G)$, is surjective by the Weierstrass interpolation theorem and its kernel
coincides with the kernel of $\|\cdot\|_v$. This completes the proof.
\end{proof}
\par
Now that this elementary issue has been settled, we turn to the completeness question. We first require a lemma.
\begin{lemma}\label{lem01}
Let $v: G \rightarrow [0,\infty[$ be a weight on a planar domain $G$. If the space $H^\infty_v(G)$ is normed, then the inclusion map $J: H^\infty_v(G) \rightarrow (H(G),\tau_{co})$ has closed graph.
\end{lemma}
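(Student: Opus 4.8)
The plan is to prove the closed graph property directly from the definitions, using the fact that $\tau_{co}$-convergence is strong enough to recover pointwise values while the $\|\cdot\|_v$-convergence controls the weighted sup-seminorm. Concretely, I would take a sequence $(f_n)$ in $H^\infty_v(G)$ together with a pair $(f,g)$ such that $f_n \to f$ in the norm of $H^\infty_v(G)$ and $J(f_n) = f_n \to g$ in $(H(G),\tau_{co})$, and show that $g = f$ in $H^\infty_v(G)$, i.e.\ that the graph point $(f,g)$ lies on the diagonal. Since both target spaces are metrizable, it suffices to verify the closed graph condition sequentially.

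First I would exploit $\tau_{co}$-convergence: because $f_n \to g$ uniformly on compact subsets of $G$, in particular $f_n(z) \to g(z)$ for every fixed $z \in G$. Next I would extract pointwise information from the norm convergence. For each fixed $z \in E_v$ we have $v(z)|f_n(z) - f(z)| \le \|f_n - f\|_v \to 0$, and since $v(z) > 0$ this forces $f_n(z) \to f(z)$. Combining the two limits gives $f(z) = g(z)$ for every $z \in E_v$.

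The remaining step is to upgrade this agreement on $E_v$ to equality on all of $G$. Here I would invoke the hypothesis that $H^\infty_v(G)$ is \emph{normed}: by Proposition~\ref{prop1}, this is equivalent to $E_v$ having a limit point in $G$. The two holomorphic functions $f$ and $g$ agree on the set $E_v$, which therefore has an accumulation point in the domain $G$, so by the identity theorem for holomorphic functions $f \equiv g$ on $G$. Thus $g = f$ as elements of $H(G)$, and in particular $g \in H^\infty_v(G)$ with $(f,g)$ on the diagonal, which is exactly the closed graph condition.

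I do not expect any serious obstacle in this argument; the proof is essentially a bookkeeping exercise that matches each topology to the piece of information it supplies. The one point requiring mild care is the logical structure of the closed graph criterion: one must remember that $J$ is the identity inclusion, so membership of the limit $g$ in the codomain $(H(G),\tau_{co})$ is automatic, and the content is only that $g$ coincides with $f$; it is precisely the normedness hypothesis, funneled through Proposition~\ref{prop1} and the identity theorem, that prevents $f$ and $g$ from differing on the $v$-null part of $G$.
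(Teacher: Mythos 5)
Your proof is correct and follows essentially the same route as the paper's: extract pointwise convergence on $E_v$ from the norm convergence, pointwise convergence everywhere from $\tau_{co}$, and then apply Proposition~\ref{prop1} together with the identity theorem to conclude $f=g$. The only addition is your explicit remark that metrizability justifies the sequential form of the closed graph condition, which the paper leaves implicit.
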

\begin{proof}
Let $(f_j)_j$ be a sequence in $H^\infty_v(G)$ such that $f_j \rightarrow f$ in $H^\infty_v(G)$ and $f_j \rightarrow g$ in $(H(G),\tau_{co})$ as $j \rightarrow \infty$. In particular, $f_j(z) \rightarrow f(z)$ as $j \rightarrow \infty$ for each $z \in E_v$ and $f_j(z) \rightarrow g(z)$ as $j \rightarrow \infty$ for each $z \in G$. Then $f$ and $g$ are two holomorphic functions on $G$ which coincide on the set $E_v$, that has a limit point in $G$ by Proposition~\ref{prop1}. By the uniqueness principle for holomorphic functions,  $f=g$ on $G$ and we are done.
\end{proof}
\par
For a point $z \in G$, we denote by $\delta_z: H(G) \rightarrow \C$ the (linear) point evaluation functional $\delta_z(f):=f(z), \ f \in H(G)$, as well as its restriction to $H^\infty_v(G)$. When $H^\infty_v(G)$ is normed, the norm of its dual space $H^\infty_v(G)'$ will be denoted by $\Vert. \Vert'_v$. The following result summarizes a result an expert would expect: the completeness of our space is essentially equivalent to the boundedness of the point evaluation functionals. Regarding condition (iv) below (uniform boundedness of point evaluations on compact sets), it should be pointed out that this property is in turn equivalent to their boundedness at each point when the space is Banach, in view of the uniform boundedness principle.
\par
\begin{proposition}\label{prop01}
Assume that the space $H^\infty_v(G)$ is normed. The following conditions are equivalent:
\begin{itemize}
\item[(i)] The space $H^\infty_v(G)$ is a Banach space.

\item[(ii)] The inclusion map $J: H^\infty_v(G) \rightarrow (H(G),\tau_{co})$ is continuous. (Equivalently, every sequence in $H^\infty_v(G)$ bounded in the norm is a normal family.)

\item[(iii)] The closed unit ball $B^\infty_v$ of $H^\infty_v(G)$ is bounded in $(H(G),\tau_{co})$.

\item[(iv)] For each $z \in G$, the point evaluation functional  $\delta_z \in H^\infty_v(G)'$ and, moreover, $\sup_{z \in K} \Vert \delta_z \Vert'_v < \infty$ for each compact subset $K$ of $G$.

\end{itemize}
\end{proposition}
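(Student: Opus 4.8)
The plan is to prove the four conditions equivalent by establishing the cycle (i) $\Rightarrow$ (ii) $\Rightarrow$ (iii) $\Rightarrow$ (iv) $\Rightarrow$ (i), since each adjacent implication reduces either to a routine observation or to a single well-known principle. For the implication (i) $\Rightarrow$ (ii) I would invoke the closed graph theorem: by hypothesis $H^\infty_v(G)$ is a Banach space and $(H(G),\tau_{co})$ is a Fr\'echet space, while Lemma~\ref{lem01} guarantees that the inclusion $J$ has closed graph. Hence $J$ is continuous. This is the step where the functional-analytic machinery does the real work, and I expect it to be the conceptual crux of the whole statement.

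The next two implications are direct. For (ii) $\Rightarrow$ (iii), a continuous linear map between locally convex spaces sends bounded sets to bounded sets, and the closed unit ball $B^\infty_v$ is bounded in the normed space $H^\infty_v(G)$, so $J(B^\infty_v)$ is bounded in $(H(G),\tau_{co})$. For (iii) $\Rightarrow$ (iv), boundedness of $B^\infty_v$ in $\tau_{co}$ means precisely that $\sup_{f \in B^\infty_v} \sup_{z \in K} |f(z)| < \infty$ for each compact $K \subset G$. Since $\Vert \delta_z \Vert'_v = \sup_{f \in B^\infty_v} |f(z)|$, taking $K$ to be a single point shows that each $\delta_z$ is bounded, and interchanging the two suprema yields $\sup_{z \in K} \Vert \delta_z \Vert'_v < \infty$.

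The essential analytic content sits in (iv) $\Rightarrow$ (i). Given a Cauchy sequence $(f_j)_j$ in $H^\infty_v(G)$, the estimate $|f_j(z) - f_k(z)| = |\delta_z(f_j - f_k)| \leq \Vert \delta_z \Vert'_v \, \Vert f_j - f_k \Vert_v$ shows that $(f_j(z))_j$ is Cauchy in $\C$ for each $z$, hence converges to some $g(z)$. The uniform bound $\sup_{z \in K} \Vert \delta_z \Vert'_v < \infty$ upgrades this to uniform Cauchyness on every compact $K$, so by completeness of $(H(G),\tau_{co})$ the limit $g$ belongs to $H(G)$ and agrees with the pointwise limit. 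Since $(f_j)_j$ is norm-bounded, say $\Vert f_j \Vert_v \leq M$, letting $j \to \infty$ in $v(z)|f_j(z)| \leq M$ gives $\Vert g \Vert_v \leq M$, so $g \in H^\infty_v(G)$. Finally, fixing $\eps$ and $N$ with $\Vert f_j - f_k \Vert_v \leq \eps$ for $j,k \geq N$, letting $k \to \infty$ inside $v(z)|f_j(z) - f_k(z)| \leq \eps$ yields $\Vert f_j - g \Vert_v \leq \eps$, so $f_j \to g$ in norm.

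The main obstacle, or rather the only place demanding genuine care, is this last implication: one must verify separately that the pointwise limit is holomorphic (which is exactly where uniform convergence on compacta, and hence the uniform control of the $\delta_z$, is indispensable) and that convergence also holds in the weighted seminorm. Both the holomorphy and the norm convergence hinge on legitimately passing to the limit $k \to \infty$ inside a supremum, which is justified by applying the pointwise Cauchy estimate before taking suprema rather than after.
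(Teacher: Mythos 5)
Your proof is correct and follows essentially the same route as the paper: the closed graph theorem applied via Lemma~\ref{lem01} for (i) $\Rightarrow$ (ii), and the same pass-to-the-limit-before-taking-suprema argument to recover norm convergence of a Cauchy sequence. Organizing the equivalences as a single cycle rather than as the pairwise equivalences (i) $\Leftrightarrow$ (ii) $\Leftrightarrow$ (iii), (ii) $\Leftrightarrow$ (iv) is only a cosmetic difference.
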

\begin{proof}
Condition (i) implies condition (ii) as a consequence of Lemma~\ref{lem01} and the closed graph theorem for Fr\'echet spaces.
\par
To prove that condition (ii) implies condition (i), fix a Cauchy sequence $(f_j)_j$ in $H^\infty_v(G)$. By assumption (ii), there exists $f \in H(G)$ such that $(f_j)_j$ converges to $f$ uniformly on the compact subsets of $G$. On the other hand,
$$
 \forall \varepsilon >0 \ \exists J \ \forall j,k \geq J \ \forall z \in G \ : \ \ v(z)|f_j(z) -f_{k}(z)| < \varepsilon.
$$
If $v(z)=0$ then $v(z)|f_j(z) -f(z)|=0, j \geq J$, and if $v(z)>0$, letting $k \rightarrow \infty$, $v(z)|f_j(z) -f(z)| \leq \varepsilon$ for all $ j \geq J$. This implies, for $\varepsilon =1$, $v(z)|f(z)| \leq 1 + \Vert f_{J} \Vert_v$ for each $z \in G$ and $f \in H^\infty_v(G)$. Moreover, for arbitrary $\varepsilon$, we have that $f_j \rightarrow f$ in $H^\infty_v(G)$ as $j \rightarrow \infty$.
\par
Thus, conditions (i) and (ii) are equivalent. Clearly, conditions (ii) and (iii) are also equivalent.
\par
We will now show the equivalence of (ii) and (iv). Suppose first that condition (ii) holds. Since $\delta_z \in (H(G),\tau_{co})^\prime$ for each $z \in G$, it follows that $\delta_z \in H^\infty_v(G)^\prime$ for each $z \in G$. Moreover, given a compact subset $K$ of $G$ there exists $C>0$ such that
$$
 \sup_{z \in K} |f(z)| \leq C \sup_{z \in G} v(z)|f(z)|
$$
for each $f \in H^\infty_v(G)$. This implies $\Vert \delta_z \Vert'_v \leq C$ for each $z \in K$. Hence condition (iv) follows.
\par
Suppose now that condition (iv) holds. Fix a compact set $K$ in $G$ and set $M:=\sup_{z \in K} \Vert \delta_z \Vert'_v$. If $f \in H^\infty_v(G)$ satisfies $\Vert f \Vert_v \leq 1$, then $|f(z)| \leq \Vert \delta_z \Vert'_v \leq M$ for each $z \in K$. This implies $\sup_{z \in K} |f(z)| \leq M \Vert f \Vert_v$ for each $f \in H^\infty_v(G)$, and the inclusion map $J: H^\infty_v(G) \rightarrow (H(G),\tau_{co})$ is continuous. This proves (ii).
\end{proof}
\begin{remark}
As a consequence of Ptak's version of the closed graph theorem \cite[Theorem 4, page 301]{H}, if $H^\infty_v(G)$ is a non-complete normed space, then it is not barrelled, \textit{i.e.\/}, there are weak-$^\ast$ bounded sets in the topological dual which are not norm bounded.
\end{remark}

\begin{theorem}\label{thm2}
Let $v: G \rightarrow [0,\infty[$ be a bounded weight on a planar domain $G$ such that the space $H^\infty_v(G)$ is normed. The space $H^\infty_v(G)$ is complete if and only if there is a bounded, continuous, strictly positive weight $\tilde{v}$ on $G$ such that $H^\infty_v(G)=H^\infty_{\tilde{v}}(G)$.
\end{theorem}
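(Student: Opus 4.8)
The plan is to prove both implications through the functional-analytic characterization of Proposition~\ref{prop01}, the easier implication being sufficiency and the substantial one being necessity, where the natural candidate for $\tilde v$ is the \emph{associated weight} $\tilde v(z) := 1/\|\delta_z\|'_v$.

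For sufficiency, suppose such a $\tilde v$ exists. Since $\tilde v$ is continuous and strictly positive, on every compact set $K \subseteq G$ we have $m_K := \min_{z \in K}\tilde v(z) > 0$, so that $|f(z)| \le \tilde v(z)|f(z)|/m_K \le \|f\|_{\tilde v}/m_K$ for all $z \in K$ and all $f \in H^\infty_{\tilde v}(G)$. Hence the point evaluations are uniformly bounded on compact sets, and Proposition~\ref{prop01} ((iv) $\Rightarrow$ (i)) shows that $H^\infty_{\tilde v}(G)$ is a Banach space. As $H^\infty_v(G)$ and $H^\infty_{\tilde v}(G)$ coincide, the space $H^\infty_v(G)$ is complete as well.

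For necessity, assume $H^\infty_v(G)$ is a Banach space. Because $v$ is bounded and $H^\infty_v(G)$ is normed, the constant function $1$ lies in $H^\infty_v(G)$ with $0 < \|1\|_v = \sup_G v < \infty$; moreover Proposition~\ref{prop01}(iv) gives $\|\delta_z\|'_v < \infty$ for each $z$ and a uniform bound on each compact set. I would then set $g(z) := \|\delta_z\|'_v = \sup\{|f(z)| : f \in B^\infty_v\}$ and $\tilde v := 1/g$. Since $1/\|1\|_v \le g(z) < \infty$, this $\tilde v$ is a bounded, strictly positive weight. Two elementary estimates identify the two spaces isometrically: from $v(z)g(z) = \sup_{f \in B^\infty_v} v(z)|f(z)| \le 1$ one gets $v \le \tilde v$, hence $\|f\|_v \le \|f\|_{\tilde v}$; conversely, for $f \ne 0$ the function $f/\|f\|_v$ lies in $B^\infty_v$, whence $|f(z)| \le g(z)\|f\|_v$, that is, $\tilde v(z)|f(z)| \le \|f\|_v$ and $\|f\|_{\tilde v} \le \|f\|_v$. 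Thus $\|f\|_{\tilde v} = \|f\|_v$ and $H^\infty_v(G) = H^\infty_{\tilde v}(G)$.

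It remains to show that $\tilde v$, equivalently $g$, is continuous, and this is the main obstacle. Lower semicontinuity is immediate, since $g$ is the supremum of the continuous functions $z \mapsto |f(z)|$, $f \in B^\infty_v$. For upper semicontinuity I would exploit that, by Proposition~\ref{prop01}(iii), the unit ball $B^\infty_v$ is a bounded subset of $(H(G),\tau_{co})$, hence a normal family, and that it is $\tau_{co}$-closed, because a locally uniform limit of functions in $B^\infty_v$ again satisfies $\|\cdot\|_v \le 1$. Given $z_n \to z$, choose $f_n \in B^\infty_v$ with $|f_n(z_n)| > g(z_n) - 1/n$, extract a $\tau_{co}$-convergent subsequence $f_{n_k} \to F \in B^\infty_v$, and use uniform convergence near $z$ to obtain $\limsup_k g(z_{n_k}) \le |F(z)| \le g(z)$; a routine subsequence argument then upgrades this to $\limsup_n g(z_n) \le g(z)$. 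Hence $g$ is continuous, which completes the construction. The delicate point throughout is precisely this passage from the normality of $B^\infty_v$ to the upper semicontinuity of the pointwise supremum $g$.
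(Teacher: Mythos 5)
Your proposal is correct, and the construction of the candidate weight is exactly the paper's: $\tilde v(z)=1/\Vert\delta_z\Vert'_v$ (the associated weight of Bierstedt--Bonet--Taskinen), with the same two elementary estimates giving $v\le\tilde v$ and the isometric identity $H^\infty_v(G)=H^\infty_{\tilde v}(G)$. Where you genuinely diverge is at the step you correctly single out as the main obstacle, the continuity of $\tilde v$. The paper considers the map $\Delta:G\rightarrow H^\infty_v(G)'$, $\Delta(z)=\delta_z$, notes it is locally bounded by condition (iv) of Proposition~\ref{prop01} and weakly holomorphic (each $z\mapsto f(z)$ is holomorphic), and invokes Grosse-Erdmann's weak criterion for vector-valued holomorphy to conclude that $\Delta$ is norm-holomorphic, hence norm-continuous, so $\Vert\Delta(\cdot)\Vert'_v$ is continuous. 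You instead prove continuity of $g(z)=\Vert\delta_z\Vert'_v$ directly: lower semicontinuity because $g$ is a supremum of moduli of functions in $B^\infty_v$, and upper semicontinuity by combining the $\tau_{co}$-boundedness of $B^\infty_v$ (Proposition~\ref{prop01}(iii)) with Montel's theorem and the $\tau_{co}$-closedness of $B^\infty_v$, extracting a locally uniformly convergent subsequence from near-extremal functions $f_n$ at $z_n\to z$; the diagonal estimate $|f_{n_k}(z_{n_k})|\to|F(z)|\le g(z)$ and the routine subsequence upgrade are both sound, since $(H(G),\tau_{co})$ is metrizable and $B^\infty_v$ is therefore sequentially compact. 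Your route is more elementary and self-contained (no vector-valued holomorphy needed), at the cost of proving only continuity of $\Delta$ rather than its holomorphy, which is all the theorem requires; you also supply an explicit argument for the ``well-known'' sufficiency direction via Proposition~\ref{prop01}(iv), where one should just note in passing that $H^\infty_{\tilde v}(G)$ is normed because $E_{\tilde v}=G$.
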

\begin{proof}
It is well-known that if $\tilde{v}$ is a bounded, continuous, strictly positive weight $\tilde{v}$ on $G$, then the space $H^\infty_{\tilde{v}}(G)$ is a Banach space. We prove the converse. To do this we follow ideas of \cite{BBT}. By assumption there is $M>0$ such that $0 \leq v(z) \leq M$ for each $z \in G$. Hence, the constant function $f_0(z):=1/M, z \in G,$ belongs to $H^\infty_v(G)$ and $\Vert f_0 \Vert_v \leq 1$.
\par
For each $z \in G$ we have $\delta_z \in H^\infty_v(G)'$ by Proposition~\ref{prop01}, and $\Vert \delta_z \Vert'_v \geq |f_0(z)| =1/M >0$. We set
$$
\tilde{v}(z):=1/\Vert \delta_z \Vert'_v, \ \ z \in G.
$$
By the previous estimate $0<\tilde{v}(z) \leq M$ for each $z \in G$. Moreover, $v(z) \leq \tilde{v}(z)$ for each $z \in G$. In fact, the inequality is obvious if $v(z)=0$. If $v(z)>0$ and
$g \in H^\infty_v(G)$ satisfies $\Vert g \Vert_v \leq 1$, then $|g(z)| \leq 1/v(z)$. This implies $1/\tilde{v}(z) \leq 1/v(z)$. Thus $v(z) \leq \tilde{v}(z)$. This implies, in particular, that $H^\infty_{\tilde{v}}(G) \subset H^\infty_v(G)$ with a continuous, norm decreasing inclusion.
\par
Now $H^\infty_{\tilde{v}}(G) = H^\infty_v(G)$ holds isometrically. Indeed, if $f \in H^\infty_v(G)$ with $\Vert f \Vert_v \leq 1$, then $\tilde{v}(z)|f(z)| \leq 1$ for each $z \in G$. Therefore $f \in H^\infty_{\tilde{v}}(G)$ and $\Vert f \Vert_{\tilde{v}} \leq 1$. This implies $H^\infty_v(G) \subset H^\infty_{\tilde{v}}(G)$ with a norm decreasing inclusion.
\par
It remains to prove that the weight $\tilde{v}$ is continuous. Indeed, the map $\Delta: G \rightarrow H^\infty_v(G)', \ \Delta(z):= \delta_z$ is well defined and locally bounded since every $z \in G$ has a compact neighborhood and the conclusion follows from condition (iv) in Proposition~\ref{prop01}. Now, for each $f \in H^\infty_v(G) \subset H^\infty_v(G)''$, the map $T_f \circ \Delta: G \rightarrow \C, z \rightarrow f(z),$ is holomorphic on $G$. By \cite[Theorem 1]{G} the vector valued mapping $\Delta: G \rightarrow H^\infty_v(G)'$ is holomorphic, hence continuous for the dual norm $\Vert . \Vert'_v$  on $H^\infty_v(G)'$. Since the norm is continuous, it follows that the function given by $\tilde{v}(z) = 1/\Vert \Delta(z) \Vert'_v$ is continuous. This completes the proof.
\end{proof}
\par
\begin{remark}
(1) A weight $v$ on $G$ is bounded if and only if the constant function $1$ belongs to $H^\infty_v(G)$ if and only if every bounded analytic function on $G$ belongs to $H^\infty_v(G)$. In this case $H^\infty_v(G)$ is non-trivial. (As is customary, we will say that a vector space is \textit{non-trivial\/}  if it contains a non-zero vector.)
\par
(2) As is usual, from now on we write
$$
  B(z_0,r):=\{ z \in \C \ | \ |z-z_0| < r \}
$$
to denote the open disk of radius $r$ centered at $z_0$. Assume that there exists a point $z_0 \in E_v$ with $B(z_0,r_0)\subset E_v$ for some $r_0 >0$ and such that the function $w(r):= \sup \{ 1/v(z) \ | \ |z-z_0| < r \}, 0 < r < r_0$, satisfies $\lim_{r \rightarrow 0} w(r)/r^n = 0$ for each $n \in \N$. Then $H^\infty_v(G) =  \{ 0 \}$. Indeed, if $f \in H^\infty_v(G)$, and $f(z)= \sum_{n=0}^{\infty} a_n (z-z_0)^n$ is the Taylor series expansion of $f(z)$ in $B(z_0,r_0)$, then the Cauchy estimates imply that $|a_n| \leq ||f||_v w(r)/r^n$ for each $0 < r < r_0$, which yields $a_n = 0$ for each $n \in \N$.
\par
(3) Given any positive integer $n$, the space $H^\infty_v(G)$ can be $n$-dimensional, at least in the case when $G=\C$. Take, for example, $v(z):= \min(1, |z|^{1/2-n})$, $z \in \C$. In this case it follows again from the Cauchy estimates that  $H^\infty_v(\C)$ consists only of the polynomials of degree at most $n-1$ since $|f(z)|\le C |z|^{n-1/2}$ for $|z|>1$.
\par
(4) Assume that $v$ is locally bounded, \textit{i.e.\/}, for each $z \in G$ there is $r(z)>0$ such that $B(z,r(z)) \subset G$ and $\sup \{v(\zeta) \ | \ \zeta \in B(z,r(z)) \} < \infty$.
If $H^\infty_v(G)\neq \{ 0 \}$, then $||\delta_z||'_v >0$ for each $z \in G$ with $\delta_z \in H^\infty_v(G)'$. Indeed, if there is a non-zero function $f_0 \in H^\infty_v(G)$ such that $f_0(z_0)=0$ and $k$ is the order of the zero $z_0$ of $f_0$, then it is easy to see that the function $g_0:=f_0(z)/(z-z_0)^k, z \in G,$ belongs to $H^\infty_v(G)$ and $g_0(z_0) \neq 0$.
\par
(5) If $v$ is locally bounded and  the dimension of $H^\infty_v(G)$ is at least $2$, then $H^\infty_v(G)$ separates points of $G$. Indeed, let $z_1$ and $z_2$ be two different points in $G$. If $ker(\delta_{z_1})$ in $H^\infty_v(G)$ is $\{ 0 \}$, then the dimension of $H^\infty_v(G)$ is $0$ or $1$, because $ker(\delta_{z_1})$ is a hyperplane of  $H^\infty_v(G)$. Otherwise, there is a non-zero function $f \in H^\infty_v(G)$  such that $f(z_1)=0$. If $f(z_2) \neq 0$, we are done. In case $f(z_2) = 0$ and $k$ is the order of the zero $z_2$ of $f(z)$, then it is enough to take $g(z):=f(z)/(z-z_2)^k \in H^\infty_v(G)$ to separate $z_1$ and $z_2$. This argument is adapted from the proof of \cite[Lemma 4]{BV}.
\end{remark}
\par\smallskip
It appears natural to consider the following topology $\tau$ on $H^\infty_v(G)$ that combines the convergence on the compact subsets of $G$ with the uniform convergence on $E_v$ induced by $1/v$. Select a fundamental sequence $(K_n)_n$ of compact subsets of $G$ and define the sequence of norms
$$
||f||_n := \sup_{z \in K_n} |f(z)| + ||f||_v, \ \  f \in H^\infty_v(G), \ \ n \in \N.
$$
It is easy to see that $(H^\infty_v(G), \tau)$ is a Fr\'echet space, that the topology $\tau$ is finer than the compact open topology $\tau_{co}$ and also finer than the topology $\tau_v$ induced by the seminorm $\|\cdot\|_v$. In fact, it is the coarsest topology that is finer than these two topologies. Our next result collects some elementary facts about this topology. Recall that a locally convex topology $\sigma$ on a space $X$ is \textit{normable\/} if there is a norm $|\cdot|$ in the space such that the topology $\sigma$ coincides with the topology induced by the norm $|\cdot|$. The space $(X,\sigma)$ is normable if and only if there is a $\sigma$-continuous (semi)norm $p$ on $X$ such that for every continuous seminorm $q$ on $(X,\sigma)$ there is a constant $C>0$ such that $q(x) \leq C p(x)$ for each $x \in X$.
\begin{proposition}\label{proptoptau}
Let $v: G \rightarrow [0,\infty[$ be a weight on a planar domain $G$.
\begin{itemize}
\item[(i)] The topology $\tau$ is coarser than $\tau_v$ if and only if the two topologies coincide (equivalently, if $(H^\infty_v(G),\|\cdot\|_v)$ is a Banach space).

\item[(ii)] The topology $\tau$ is normable if and only if there is a compact set $K$ in $G$ such that the weight $w(z):=v(z)+\chi_K(z), z \in G,$ where $\chi_K$ is the characteristic function of $K$, makes $H^\infty_w(G)$ into a Banach space.

\item[(iii)] If $E_v$ is contained in a compact subset of $G$ and $v$ is bounded, then $\tau$ coincides with the compact open topology $\tau_{co}$ on $H^\infty_v(G)$. Under the assumptions that $H^\infty_v(G)$ contains the polynomials and $G$ is simply connected, the converse also holds: if $\tau=\tau_{co}$ on $H^\infty_v(G)$, then there is a compact set $K$ in $G$ such that $E_v \subset K$.

\item[(iv)] If the topology $\tau$ is normable, then $E_v$ is not discrete.
\end{itemize}
\end{proposition}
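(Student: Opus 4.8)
The plan is to handle the four assertions in turn, using the normability criterion recalled just before the statement, the completeness characterization of Proposition~\ref{prop01}, the open mapping theorem for Fr\'echet spaces, and, at one point, elementary function theory. Throughout I use that $\tau$ is by construction finer than both $\tau_{co}$ and $\tau_v$, and finer than the topology of each $\|\cdot\|_n$.

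For (i), since $\tau$ is finer than $\tau_v$, the phrase ``$\tau$ coarser than $\tau_v$'' is the same as ``$\tau=\tau_v$''. If $\tau=\tau_v$, then as $(H^\infty_v(G),\tau)$ is Fr\'echet and hence Hausdorff, $\|\cdot\|_v$ is a complete norm, so the space is Banach; conversely, if $H^\infty_v(G)$ is Banach, Proposition~\ref{prop01}(ii) gives $\sup_{z\in K_n}|f(z)|\le C_n\|f\|_v$, whence $\|f\|_n\le(1+C_n)\|f\|_v$, so $\tau\subseteq\tau_v$ and the two coincide. For (ii), the bounds $\|f\|_v\le\|f\|_w$, $\sup_{z\in K}|f(z)|\le\|f\|_w$ and $\|f\|_w\le\|f\|_v+\sup_{z\in K}|f(z)|$ (coming from $w=v+\chi_K$) show that $\|\cdot\|_w$ is equivalent to $\|\cdot\|_{n_0}$ whenever $K\subseteq K_{n_0}$, and that $H^\infty_w(G)=H^\infty_v(G)$ as sets. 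Given this, the forward direction uses the stated criterion: a $\tau$-continuous norm dominating every continuous seminorm is itself $\le C\|\cdot\|_{n_0}$ for some $n_0$, so every $\|\cdot\|_m\le C_m\|\cdot\|_{n_0}$, and $\tau$ is the complete topology of $\|\cdot\|_{n_0}\sim\|\cdot\|_w$, making $H^\infty_w(G)$ Banach. For the converse I apply the open mapping theorem to the identity map $(H^\infty_v(G),\tau)\to(H^\infty_w(G),\|\cdot\|_w)$, a continuous linear bijection of Fr\'echet spaces because $\|\cdot\|_w\le\|\cdot\|_{n_0}$; its inverse is then continuous, forcing $\tau$ to coincide with the norm topology of $\|\cdot\|_w$.

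For (iii), the forward implication is immediate: if $E_v\subseteq L$ with $L$ compact and $v\le M$, then $\|f\|_v=\sup_{z\in E_v}v(z)|f(z)|\le M\sup_{z\in L}|f(z)|$, so each $\|\cdot\|_n$ is $\tau_{co}$-continuous and $\tau=\tau_{co}$. The converse is the substantial part. From $\tau=\tau_{co}$ the $\tau$-continuous seminorm $\|\cdot\|_v$ is $\tau_{co}$-continuous, so $\|f\|_v\le C\sup_{z\in K}|f(z)|$ for some compact $K$ and all $f\in H^\infty_v(G)$, in particular for all polynomials. I then pass to the polynomial convex hull $\hat{K}$, which is compact and, because $G$ is simply connected, satisfies $\hat{K}\subseteq G$. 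If $E_v$ were not contained in any compact subset of $G$, pick $z_k\in E_v$ leaving every compact set, so eventually $z_k\notin\hat{K}$; then there is a polynomial $p$ with $|p(z_k)|>\sup_{z\in K}|p(z)|$, and replacing $p$ by a high power makes the ratio exceed $C/v(z_k)$. Normalizing $\sup_{z\in K}|p(z)|\le1$ gives $\|p\|_v\ge v(z_k)|p(z_k)|>C\ge C\sup_{z\in K}|p(z)|$, contradicting the inequality above; hence $E_v\subseteq\hat{K}$.

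For (iv) I argue by contraposition. If $E_v$ is discrete, then $N:=\ker\|\cdot\|_v=\{f\in H(G):f|_{E_v}=0\}$ is, by the Weierstrass interpolation theorem (as in Proposition~\ref{prop1}) together with multiplication by polynomials, infinite-dimensional, and it is $\tau_{co}$-closed as $\bigcap_j\ker\delta_{z_j}$. On $N$ one has $\|f\|_v=0$, so $\|f\|_n=\sup_{z\in K_n}|f(z)|$ and $\tau|_N=\tau_{co}|_N$. Since $(H(G),\tau_{co})$ is Montel, its infinite-dimensional closed subspace $N$ is Montel and hence not normable; but a subspace of a normable space is normable, so $\tau$ normable would force $\tau|_N$ normable, a contradiction. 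Therefore $E_v$ is not discrete. I expect the converse direction in (iii) to be the main obstacle: it is the only place where genuine function theory enters, and it is exactly where simple connectivity is needed (to ensure $\hat{K}\subseteq G$) together with the power trick to blow up point evaluations relative to the supremum over $K$; the remaining parts are soft functional-analytic arguments once the equivalence $\|\cdot\|_w\sim\|\cdot\|_{n_0}$ and the identification $\tau|_N=\tau_{co}|_N$ are established.
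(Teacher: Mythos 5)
Your proof is correct and follows essentially the same route as the paper in all four parts: Proposition~\ref{prop01} for (i) and the forward half of (iii), the normability criterion together with the equivalence of $\|\cdot\|_w$ and $\|\cdot\|_{n_0}$ for (ii), the polynomial-hull/Runge power trick for the converse of (iii), and Montel plus Riesz applied to the kernel of $\|\cdot\|_v$ for (iv). The only (harmless) deviations are that you prove the converse of (ii) via the open mapping theorem for Fr\'echet spaces where the paper simply applies part (i) to the weight $w$, and that you obtain infinite-dimensionality in (iv) by multiplying a single Weierstrass function by powers of $z$ rather than interpolating a linearly independent sequence directly.
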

\begin{proof}
(i) If $\tau$ is coarser than $\tau_v$, then the two topologies coincide and  $(H^\infty_v(G),\|\cdot\|_v)$ is a Banach space, because $\tau_v$  is  Hausdorff and complete since it coincides with $\tau$. Conversely, if $(H^\infty_v(G),\|\cdot\|_v)$ is a Banach space, then $\tau_v$ is finer than $\tau_{co}$ by Proposition~\ref{prop01}. This implies that $\tau_v$ is finer than $\tau$.
\par
(ii) The topology $\tau$ is normable if and only if there is $m$ such that for all $n$ there is $C_n>0$ with $||f||_n \leq C_m ||f||_m$ for each $f \in H^\infty_v(G)$. We select $K=K_m$ and define $w$ as in the statement. Clearly $H^\infty_v(G)=H^\infty_w(G)$ and $||f||_w = \sup_{z \in G} w(z)|f(z)| \leq ||f||_m \leq 2 ||f||_w$ for each $f \in H^\infty_v(G)$. Therefore, $\tau$ is normable if and only if $\tau=\tau_w$. The conclusion now follows from part (i).
\par
(iii)  The topology $\tau$ is always finer than $\tau_{co}$ on $H^\infty_v(G)$. Assume that there is $m \in \N$ such that $E_v \subset K_m$ and that there is $M>0$ such that $v(z) \leq M$ for all $z \in G$. Then $||f||_v  \leq M \sup_{z \in K_m} |f(z)|$ for each $f \in H^\infty_v(G)$. This implies $||f||_n \leq (M+1) \sup_{z \in K_n} |f(z)|$ for each $f \in H^\infty_v(G)$ and each $n \geq m$. Therefore $\tau$ is coarser than $\tau_{co}$ on $H^\infty_v(G)$.

Now assume that $H^\infty_v(G)$ contains the polynomials, $G$ is simply connected and $\tau=\tau_{co}$ on $H^\infty_v(G)$. Then there is a compact set $K$ in $G$ and there is $C>0$ such that $\sup_{z \in G} v(z)|f(z)| \leq C \sup_{z \in K} |f(z)|$ for each $f \in H^\infty_v(G)$. We may assume that $\C \setminus K$ is connected. Proceeding by contradiction, suppose that there is $z_0 \in E_v \setminus K$. We have $|f(z_0)| \leq (C/v(z_0)) \sup_{z \in K} |f(z)|$ for each $f \in H^\infty_v(G)$. By Runge's theorem $K$ is polynomially convex and there is a polynomial $g$ such that $|g(z_0)| > \alpha > \sup_{z \in K} |g(z)|$. Define $h_n:=(g/\alpha)^n, n \in \N$. Each $h_n$ is a polynomial, hence it belongs to $H^\infty_v(G)$ by assumption. We have $|h_n(z_0)| \leq (C/v(z_0)) \sup_{z \in K} |h_n(z)| \leq C/v(z_0)$ for each $n \in \N$. But
$\lim_{n \rightarrow \infty} |h_n(z_0)| = \infty$. This is a contradiction.
\par
(iv) Assume that $E_v$ is discrete in $G$ and define $X:=\{f \in H^\infty_v(G) \ | \ f(z)=0 \ {\rm for \ all } \ z \in E_v \}$. The space $X$ (and accordingly $H^\infty_v(G)$) is infinite dimensional. Indeed, take another discrete set $F$ in $G$ disjoint with $E_v$. We can apply the Weierstrass interpolation theorem to find a linearly independent sequence $(f_n)_n$ of analytic functions on $G$ which vanish on $E_v$. Clearly this sequence is contained in $X$. On the other hand, the topology $\tau$ restricted to $X$ coincides with the
restriction to $X$ of the topology $\tau_{co}$ of uniform convergence on the compact subsets of $G$. To see this, just compare the norms on elements of $X$. If $(H^\infty_v(G),\tau)$ is normable, so is the space $(X,\tau)$. But $(X,\tau)$ is a closed subspace of the  space $(H(G), \tau_{co})$. By Montel's theorem the bounded subsets of $(X,\tau_{co})=(X,\tau)$ are relatively compact. By a theorem of Riesz the normed space $(X,\tau_{co})=(X,\tau)$ must be finite dimensional. This is a contradiction.
\end{proof}
\par
As a consequence of Proposition~\ref{proptoptau}, if $H^\infty_v(G)$ is a Banach space (in particular in the classical case when $v$ is continuous and strictly positive), then the topology $\tau$ coincides with $\tau_v$. On the other hand, if $v$ is a weight on a simply connected domain $G$ in $\C$ such that $E_v$ is an infinite discrete subset of $G$ and $H^\infty_v(G)$ contains the polynomials, then $\tau$ is not normable by Proposition \ref{proptoptau} (iv) and it is strictly finer than the topology $\tau_{co}$ of uniform convergence on the compact subsets of $G$ by Proposition \ref{proptoptau} (iii). See also Corollary~\ref{cor0} below.

\section{Function theoretic approach}\label{sect2}
Our approach in this section (kindly suggested by the referee) is based on function theory. We begin with the following lemma.
\begin{lemma}\label{holext1}
Let $v$ be a weight on the planar domain $G$ such that $E_v$ is not discrete. Then the normed space $H^\infty_v(G)$ is complete if and only if every function $f:E_v \rightarrow \C$ for which there is a sequence $(f_n)_n$ in $H^\infty_v(G)$ such that $v(z)|f_n(z)-f(z)| \rightarrow 0$ uniformly on $E_v$ as $n \rightarrow \infty$ has a (necessarily unique) holomorphic extension to $G$.
\end{lemma}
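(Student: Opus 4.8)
The plan is to exploit the fact that the seminorm $\|\cdot\|_v$ ``sees'' only the set $E_v$: since $v(z)|f(z)|=0$ whenever $v(z)=0$, we have $\|f\|_v = \sup_{z \in E_v} v(z)|f(z)|$ for every $f \in H^\infty_v(G)$, and the hypothesis ``$v(z)|f_n(z)-f(z)| \to 0$ uniformly on $E_v$'' says precisely that $\sup_{z \in E_v} v(z)|f_n(z)-f(z)| \to 0$. Moreover, at each fixed $z \in E_v$ we have $v(z)>0$, so norm convergence forces ordinary pointwise convergence there, via $|f_n(z)-f_m(z)| \le v(z)^{-1}\|f_n-f_m\|_v$. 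These two observations are what let me translate between completeness of the space and the holomorphic extension property. Throughout I use that $E_v$ is not discrete, so $H^\infty_v(G)$ is genuinely normed by Proposition~\ref{prop1} and the uniqueness principle applies on $E_v$.

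For the forward implication, I would start from a function $f:E_v \to \C$ and a sequence $(f_n)_n$ in $H^\infty_v(G)$ with $\sup_{z \in E_v} v(z)|f_n(z)-f(z)| \to 0$. The triangle inequality shows $(f_n)_n$ is $\|\cdot\|_v$-Cauchy, so by completeness it converges in norm to some $F \in H^\infty_v(G)$. Evaluating at a fixed $z \in E_v$ and dividing by $v(z)>0$, I obtain both $f_n(z) \to F(z)$ and $f_n(z) \to f(z)$, whence $F(z)=f(z)$ for every $z \in E_v$. Thus $F$ is the desired holomorphic extension of $f$, and its uniqueness follows from the uniqueness principle because $E_v$ has a limit point in $G$.

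For the converse, I would take an arbitrary $\|\cdot\|_v$-Cauchy sequence $(f_n)_n$ and first build a candidate limit on $E_v$: for each $z \in E_v$ the scalar sequence $(f_n(z))_n$ is Cauchy, hence converges to some value, which I call $f(z)$. Letting $m \to \infty$ in the Cauchy estimate $v(z)|f_n(z)-f_m(z)| \le \eps$ (valid for $n,m \ge N$ and all $z \in E_v$) yields $\sup_{z \in E_v} v(z)|f_n(z)-f(z)| \le \eps$ for $n \ge N$; that is, $f$ is exactly of the form to which the hypothesis applies. The hypothesis then furnishes a holomorphic $F \in H(G)$ with $F|_{E_v}=f$. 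It remains to check that $F$ lies in the space and is the limit: since $\|F\|_v = \sup_{z \in E_v} v(z)|f(z)| \le \|f_n\|_v + \sup_{z\in E_v} v(z)|f_n(z)-f(z)|$ is finite, we get $F \in H^\infty_v(G)$; and because $F=f$ on $E_v$ while $v\equiv 0$ off $E_v$, it follows that $\|f_n-F\|_v = \sup_{z\in E_v} v(z)|f_n(z)-f(z)| \to 0$, so $f_n \to F$ in norm.

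I do not expect a serious analytic obstacle; the content is the bookkeeping of the translation described in the first paragraph. The point that requires care — and is really the whole reason the lemma is phrased this way — is that the norm limit of a Cauchy sequence is a priori only determined as a function on $E_v$, so the genuine issue is whether that partially defined function extends holomorphically to all of $G$; this is simultaneously the hypothesis supplied in the reverse direction and the conclusion delivered in the forward direction. The one small place where one must stay alert is verifying $F \in H^\infty_v(G)$, rather than merely $F \in H(G)$, in the converse, which is exactly where boundedness of the Cauchy sequence in norm gets used.
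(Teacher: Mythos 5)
Your proof is correct and follows essentially the same route as the paper's: in the forward direction the norm limit guaranteed by completeness is the extension, and in the converse direction one extracts the pointwise limit on $E_v$ from a Cauchy sequence and invokes the extension hypothesis. The only difference is that you spell out the details (the reduction of $\|\cdot\|_v$ to a supremum over $E_v$, the verification that $F\in H^\infty_v(G)$) that the paper leaves as ``clearly''.
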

\begin{proof}
If $H^\infty_v(G)$ is complete and $(f_n)_n$ is a sequence in $H^\infty_v(G)$ such that $v(z)|f_n(z)-f(z)| \rightarrow 0$ uniformly on $E_v$ as $n \rightarrow \infty$ for some
function $f:E_v \rightarrow \C$, then $(f_n)_n$ is a Cauchy sequence in $H^\infty_v(G)$ and there is $g \in H^\infty_v(G)$ such that $||f_n - g||_v \rightarrow 0$ as $n \rightarrow \infty$. The function $g$ is a holomorphic extension of $f$ to $g$. Conversely, let $(f_n)_n$ is a Cauchy sequence in $H^\infty_v(G)$. There is $f:E_v \rightarrow \C$ such that $v(z)|f_n(z)-f(z)| \rightarrow 0$ uniformly on $E_v$ as $n \rightarrow \infty$. Our assumption now implies that there is $g \in H(G)$ such that $g(z)=f(z)$ for all $z \in E_v$. Clearly $g \in H^\infty_v(G)$ and the sequence $(f_n)_n$ converges to $g$ in $H^\infty_v(G)$.
\end{proof}
\par\smallskip
In the sequel, as is usual, we will denote the boundary of a set $A$ by $\partial A$.
\begin{proposition} \label{prop4improved}
Let $v: G \rightarrow [0,\infty[$ be a  weight on a planar domain $G$. If $H^\infty_v(G)$ is a non-trivial Banach space, then the boundary $\partial G$ is contained in the closure $\overline{E}_v$ of $E_v$ in $\C$.
\end{proposition}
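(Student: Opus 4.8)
The plan is to argue by contradiction: suppose there is a boundary point $\zeta \in \partial G$ with $\zeta \notin \overline{E}_v$. The strategy is to use this separation to manufacture an explicit Cauchy sequence in $H^\infty_v(G)$ whose pointwise limit on $E_v$ extends to a function that is \emph{not} holomorphic on all of $G$ (it develops a singularity near $\zeta$), thereby violating completeness via the criterion in Lemma~\ref{holext1}. Since $\zeta \notin \overline{E}_v$, there is a radius $\rho>0$ with $B(\zeta,\rho) \cap \overline{E}_v = \emptyset$; in particular $v(z)=0$ for every $z \in G \cap B(\zeta,\rho)$, so these points contribute nothing to the seminorm $\|\cdot\|_v$. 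This is the key leverage: the weight cannot ``see'' what $f$ does near $\zeta$.

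First I would fix such a point $\zeta$ and radius $\rho$, and note that because $\zeta \in \partial G$, the disk $B(\zeta,\rho)$ meets $G$ but $\zeta \notin G$. The idea is to take the singular function $h(z) := 1/(z-\zeta)$, which is holomorphic on $G$ (since $\zeta \notin G$) but fails to extend holomorphically across $\zeta$, and to realize it as a $\|\cdot\|_v$-limit of elements of $H^\infty_v(G)$. The natural candidates for the approximating sequence are truncations or rescalings designed so that each $f_n \in H^\infty_v(G)$ while $v(z)|f_n(z)-h(z)| \to 0$ uniformly on $E_v$. Since $E_v \subset G \setminus B(\zeta,\rho)$, on $E_v$ the function $h$ is bounded by $1/\rho$, so one expects $h$ itself (or $h$ times a bounded analytic factor chosen to control growth) to lie in $H^\infty_v(G)$ already, or to be approximable by such functions.

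The cleanest route I would pursue: using non-triviality, fix a nonzero $g_0 \in H^\infty_v(G)$, and consider the functions $g_0(z)/(z-\zeta)^n$ or partial-sum type approximations that are designed to converge in $\|\cdot\|_v$ to a function with a pole (or worse, an essential singularity) at $\zeta$. Because $v$ vanishes identically on $G \cap B(\zeta,\rho)$, the $\|\cdot\|_v$-norm is insensitive to the blow-up near $\zeta$, so the sequence is $\|\cdot\|_v$-Cauchy and converges on $E_v$ to a limit function whose only candidate holomorphic extension to $G$ must agree with the singular limit — forcing a singularity inside $G \cap B(\zeta,\rho)$ and contradicting holomorphy there. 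By Lemma~\ref{holext1}, the absence of a holomorphic extension contradicts completeness of $H^\infty_v(G)$.

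The main obstacle will be the construction of the approximating sequence with \emph{uniform} control on $E_v$: I must ensure simultaneously that each $f_n$ genuinely lies in $H^\infty_v(G)$ (finite seminorm over all of $G$, not just on $E_v$) and that the convergence $v(z)|f_n(z)-f(z)|\to 0$ is uniform over the possibly unbounded, irregularly shaped set $E_v$. Handling the behaviour of the limit function on the part of $G$ that lies \emph{inside} $B(\zeta,\rho)$ — where $v\equiv 0$ but the function may be unbounded — is exactly where the contradiction is extracted, and exactly where care is needed: the limit on $E_v$ admits no holomorphic extension to this region precisely because any such extension would have to carry the singularity at $\zeta$ into $G$. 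Making this rigorous while verifying the hypotheses of Lemma~\ref{holext1} is the crux of the argument.
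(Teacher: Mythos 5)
Your high-level strategy matches the paper's: argue by contradiction via Lemma~\ref{holext1}, exploiting the fact that $v$ vanishes on $G\cap B(\zeta,\rho)$ so that the seminorm cannot detect bad behaviour there. But the concrete construction you sketch does not work, and the step you are missing is exactly the crux. A function such as $1/(z-\zeta)$, or $g_0(z)/(z-\zeta)^n$, with $\zeta\in\partial G$, is already holomorphic on all of $G$ precisely because $\zeta\notin G$; a singularity \emph{at a boundary point} is no obstruction whatsoever to having a holomorphic extension to $G$, which is all Lemma~\ref{holext1} asks for. So realizing such a function as a $\Vert\cdot\Vert_v$-limit yields no contradiction. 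To contradict the lemma you must produce a limit function whose singularity sits at an \emph{interior} point $z_1\in G\cap B(\zeta,\rho)$, while the approximating functions are holomorphic on all of $G$ and converge uniformly on $E_v$. You name this target in passing (``forcing a singularity inside $G\cap B(\zeta,\rho)$'') but your proposed approximants, whose only singularities are at $\zeta$, cannot deliver it, and you give no mechanism that would.

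The paper's device is the following: pick $z_1\in G$ with $|z_1-\zeta|<\rho$ and $h(z_1)\neq 0$ for some nonzero $h\in H^\infty_v(G)$ (possible since the zeros of $h$ are isolated), and expand
$$
\frac{z_1-\zeta}{z-z_1}=\sum_{n=1}^{\infty}\left(\frac{z_1-\zeta}{z-\zeta}\right)^n ,
$$
valid uniformly on $\{|z-\zeta|\ge |z_1-\zeta|+\varepsilon\}\supset E_v$. The partial sums $g_n$ are holomorphic on $\C\setminus\{\zeta\}\supset G$ and bounded on $E_v$, so $hg_n\in H^\infty_v(G)$ and $hg_n\to hf$ uniformly on $E_v$ with respect to $v$; yet $hf$ has a pole at $z_1\in G$, contradicting Lemma~\ref{holext1}. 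Without this (or an equivalent) construction transferring the singularity from the boundary point $\zeta$ to an interior point $z_1$, your argument does not close; as written it only shows that certain functions holomorphic on $G$ are limits of functions holomorphic on $G$, which is vacuous. A secondary point: you would also need to ensure the multiplier $h$ does not vanish at the chosen interior singularity, or the pole is cancelled.
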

\begin{proof}
Assume, on the contrary, that there is $z_0 \in \partial G \setminus \overline{E}_v$. Let $r>0$ be such that $|z-z_0| \geq r$ for all $z \in E_v$. Select a non-zero function
$h \in H^\infty_v(G)$. There is $z_1 \in G$ with $|z_1-z_0|< r$ such that $h(z_1) \neq 0$. We have that
$$
f(z):= \frac{z_1-z_0}{z-z_1}= \sum_{n=1}^{\infty} \(\frac{z_1-z_0}{z-z_0} \)^n
$$
uniformly on $|z-z_0|\geq |z_1-z_0| + \varepsilon$ for all $\varepsilon >0,$ and therefore uniformly on $E_v$. Then the functions
$$
g_n(z):= \sum_{k=1}^{n} \(\frac{z_1-z_0}{z-z_0} \)^k \ , \ \ \ n \in \N,
$$
are holomorphic on $G$ and converge to $f$ on $E_v$. Note that $h g_n \in H^\infty_v(G)$ since $g_n$ is bounded on $E_v$. We have
$$
\sup_{z \in E_v} v(z)|(hg_n)(z)-(hf)(z) \leq ||h||_v \sup_{z \in E_v} |g_n(z) - f(z)| \rightarrow 0,
$$
as $n \rightarrow \infty$. By Lemma~\ref{holext1} the function $hf:E_v \rightarrow \C$ has a unique holomorphic extension to $G$. However, $hf$ has a pole at $z_1$.
This is a contradiction.
\end{proof}

The idea of working with a non-zero element $h$ of $H^\infty_v(G)$ in the above proof can be found in Gaier \cite[p.~151]{Ga}. It avoids the assumption that $v$ is bounded which was needed in our original proof.
\begin{corollary} \label{cor1}
Let $v: G \rightarrow [0,\infty[$ be a weight on a planar domain $G$ (other than the plane itself) such that $H^\infty_v(G)$ is normed and non-trivial. \begin{itemize}
\item[(1)] If $E_v$ is contained in a convex closed proper subset $A$ of $G$, then $H^\infty_v(G)$ is not a Banach space.
\item[(2)] If the closure of $E_v$ in $\C$ is a compact subset of  $G$, then $H^\infty_v(G)$ is not a Banach space.
\end{itemize}
\end{corollary}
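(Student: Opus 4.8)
My plan is to derive incompleteness in both cases from the extension criterion of Lemma~\ref{holext1}, by exhibiting a function with a pole inside $G$ that is nevertheless a $\|\cdot\|_v$-limit of elements of $H^\infty_v(G)$. Part~(2) I would get for free from Proposition~\ref{prop4improved}: since $G\neq\C$ is open, $\partial G\neq\emptyset$, and since $\overline{E}_v$ is compact and contained in the open set $G$, it is disjoint from $\partial G$. Hence there is $z_0\in\partial G$ with $z_0\notin\overline{E}_v$, so $\partial G\not\subseteq\overline{E}_v$; as $H^\infty_v(G)$ is non-trivial, Proposition~\ref{prop4improved} shows it is not Banach.

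For Part~(1) one has $\overline{E}_v\subseteq\overline{A}$, but it may happen that $\partial G\subseteq\overline{A}$ (for instance when $G$ contains a half-plane on whose complementary closed half-plane $E_v$ is supported), so Proposition~\ref{prop4improved} no longer applies directly and I would re-run the pole-pushing mechanism behind its proof. Fix a non-zero $h\in H^\infty_v(G)$. Since $A$ is convex and a proper (relatively) closed subset of $G$, any $w\in G\setminus A$ lies outside $\overline{A}$, and strict separation of the point $w$ from the closed convex set $\overline{A}$ yields a unit vector $u$ and a $\delta>0$ with $\overline{A}\subseteq\{z:\ \re(\bar u(z-w))\le-\delta\}$. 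I would then choose $z_1\in G$ near $w$ with $h(z_1)\neq0$ and still $\re(\bar u(z-z_1))\le-\delta'<0$ for all $z\in A$. Here is exactly where convexity enters: on the resulting half-plane the entire functions $z\mapsto e^{t\bar u(z-z_1)}$, $t\ge0$, satisfy $|e^{t\bar u(z-z_1)}|\le1$, so $h$ times any finite combination of them lies in $H^\infty_v(G)$ with $v$-norm at most $\|h\|_v$.

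Next I would represent the target by a Laplace integral, $f(z):=1/(z_1-z)=\bar u\int_0^\infty e^{-t\bar u z_1}\,e^{t\bar u z}\,dt$, which is valid because $\re(\bar u(z_1-z))\ge\delta'>0$ on $A$, and approximate this integral on $E_v$ by finite exponential sums $g_n$. These are entire (hence holomorphic on $G$), uniformly bounded on $A$, and converge to $f$ uniformly on $E_v$. Consequently $hg_n\in H^\infty_v(G)$ and $\|hg_n-hf\|_v\le\|h\|_v\sup_{E_v}|g_n-f|\to0$, so if the space were complete, Lemma~\ref{holext1} would force $hf$ to extend holomorphically across $z_1\in G$; but $h(z_1)\neq0$ makes $z_1$ a genuine pole of $hf$, a contradiction.

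The main obstacle is precisely this uniform approximation of $f$ on $E_v$ by the exponential sums $g_n$. When $E_v$ is bounded it is routine, as Riemann sums of the above integral converge uniformly on bounded sets. When $E_v$ is unbounded the Riemann sums need not converge uniformly, and one must instead invoke a tangential (Arakelian/Carleman-type) approximation theorem on the closed convex set $\overline{A}$; convexity again helps, since the complement of $\overline{A}$ in the Riemann sphere is connected and locally connected, apart from the degenerate case in which $\overline{A}$ lies on a line, which I would handle separately. Securing this uniform approximation while keeping the approximants bounded on $A$, so that the products $hg_n$ remain in $H^\infty_v(G)$, is the delicate point of the argument.
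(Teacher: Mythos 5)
Your Part~(2) is exactly the paper's argument: Proposition~\ref{prop4improved} plus the observation that a compact $\overline{E}_v\subset G$ cannot contain the nonempty set $\partial G$. For Part~(1), however, the paper does nothing more than this either: it reads the hypothesis so that $A$ (and hence $\overline{E}_v$, closure in $\C$) is a subset of $G$, whence $\overline{E}_v$ again misses $\partial G\neq\emptyset$ and Proposition~\ref{prop4improved} applies verbatim; convexity plays no role in that deduction. Your worry about the relatively-closed reading is not unreasonable (e.g.\ $G=\C\setminus\,]-\infty,0]$ with $A=\{\re z\cdot 0\le \mathrm{Im}\,z\}\cap G$ gives a convex, relatively closed, proper $A$ with $\partial G\subset\overline{A}$), but it leads you into an argument the paper never needs.

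As written, your Part~(1) has a genuine gap, and it sits exactly where you flag it: you never actually produce entire functions $g_n$ that converge to $1/(z_1-z)$ \emph{uniformly on $E_v$} while staying \emph{uniformly bounded on $A$}. Riemann sums of the Laplace integral really do fail when $A$ is unbounded (the $t$-derivative of the integrand has modulus $|z-z_1|e^{-t\,\re(\bar u(z-z_1))}$, which for $z$ running off to infinity along $\partial A$ is of order $|z-z_1|e^{-t\delta'}$ and hence unbounded for each fixed $t$), and an appeal to Arakelian/Carleman does not help because those theorems give no control on $\sup_A|g_n|$, which is precisely what you need to keep $hg_n$ in $H^\infty_v(G)$. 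The good news is that your own framework closes the gap in one line if you truncate the integral instead of discretizing it: for $T>0$ set
$$
g_T(z):=\bar u\int_0^T e^{-t\bar u z_1}e^{t\bar u z}\,dt=\frac{e^{T\bar u(z-z_1)}-1}{z-z_1},
$$
which is entire, satisfies $|g_T(z)|\le \bigl(1+e^{-T\delta'}\bigr)/\mathrm{dist}(z_1,A)$ on $A$, and converges to $1/(z_1-z)$ uniformly on $A$ (the error is $e^{-T\bar u(z_1-z)}/(z_1-z)$, of modulus at most $e^{-T\delta'}/\mathrm{dist}(z_1,A)$). With that substitution your pole-pushing argument via Lemma~\ref{holext1} goes through and in fact proves a version of Part~(1) stronger than the one the paper's one-line proof covers; without it, the proof is incomplete at its decisive step.
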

\begin{proof}
This is a direct consequence of Proposition~\ref{prop4improved}.
\end{proof}
\par
As a consequence of Corollary~\ref{cor1} one easily deduces the following example that one expects intuitively: the weight $v(z)=\max  \{0,\re z\}$ is continuous on the unit disk $\D$, vanishes in the left-hand half of the disk, it is strictly positive in the remaining open right semi-disk, and $H^\infty_v(\mathbb{D})$ is not a Banach space.
\par
\begin{corollary} \label{cor0}
Let $v: G \rightarrow [0,\infty[$ be a weight on a planar domain $G$ (other than the plane itself) such that $H^\infty_v(G)$ is normed and non-trivial. If the boundary $\partial G$ is not contained in the closure $\overline{E}_v$ of $E_v$ in $\C$, then the topology $\tau$ of Proposition~\ref{proptoptau} on the space $H^\infty_v(G)$ is not normable.
\end{corollary}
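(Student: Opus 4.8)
The plan is to argue by contradiction and reduce everything to the already-established Proposition~\ref{prop4improved}, using the normability criterion of Proposition~\ref{proptoptau}(ii). Suppose that $\tau$ is normable. Then Proposition~\ref{proptoptau}(ii) supplies a compact set $K \subset G$ for which the weight $w := v + \chi_K$ turns $H^\infty_w(G)$ into a Banach space. My strategy is to check that this Banach space $H^\infty_w(G)$ satisfies the hypotheses of Proposition~\ref{prop4improved}, apply that proposition to locate $\partial G$ inside $\overline{E}_w$, and then exhibit a boundary point of $G$ that cannot belong to $\overline{E}_w$.

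First I would verify that $H^\infty_w(G)$ is a non-trivial normed space. Since $w(z)>0$ exactly when $v(z)>0$ or $z\in K$, we have $E_w = E_v \cup K \supseteq E_v$, and $E_v$ is non-discrete because $H^\infty_v(G)$ is normed (Proposition~\ref{prop1}); hence $E_w$ is non-discrete and $H^\infty_w(G)$ is normed. For non-triviality, take any non-zero $h \in H^\infty_v(G)$. Because $h$ is holomorphic and $K$ is compact, $|h|$ is bounded on $K$, so $w|h| = v|h| + \chi_K|h|$ is bounded on $G$; thus $h \in H^\infty_w(G)$ and the space is non-trivial.

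Now Proposition~\ref{prop4improved} applies and gives $\partial G \subseteq \overline{E}_w$. Since $K$ is compact (hence closed), $\overline{E}_w = \overline{E_v \cup K} = \overline{E}_v \cup K$. The hypothesis $\partial G \not\subseteq \overline{E}_v$ furnishes a point $z_0 \in \partial G$ with $z_0 \notin \overline{E}_v$. The key observation is that $z_0$ also fails to lie in $K$: as $G$ is open, $\partial G \cap G = \emptyset$, so $z_0 \notin G$ and a fortiori $z_0 \notin K \subseteq G$. Consequently $z_0 \notin \overline{E}_v \cup K = \overline{E}_w$, contradicting $\partial G \subseteq \overline{E}_w$. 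Therefore $\tau$ cannot be normable.

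I do not expect a genuine obstacle here; the argument is essentially bookkeeping built on the two quoted results. The one point that requires a little care is the identity $\overline{E}_w = \overline{E}_v \cup K$ together with the elementary but crucial fact that a point of $\partial G$ can never belong to a compact subset of the open set $G$; these are precisely what prevent the added compact ``patch'' $K$ in the weight $w$ from accidentally covering the bad boundary point $z_0$.
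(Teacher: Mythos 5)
Your argument is correct and follows essentially the same route as the paper: assume normability, invoke Proposition~\ref{proptoptau}(ii) to produce $w=v+\chi_K$ with $H^\infty_w(G)$ Banach, and derive a contradiction with Proposition~\ref{prop4improved} from the fact that $\overline{E}_w=\overline{E}_v\cup K$ cannot reach the boundary point $z_0\in\partial G\setminus\overline{E}_v$. Your explicit verification that $H^\infty_w(G)$ is normed and non-trivial (so that Proposition~\ref{prop4improved} genuinely applies) is a small point the paper leaves implicit, and it is a welcome addition.
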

\begin{proof}
Assume, on the contrary,  that  $(H^\infty_v(G),\tau)$ is normable. By Proposition~\ref{proptoptau} there is a compact set $K$ in $G$ such that the weight $w(z):=v(z)+\chi_K(z), z \in G,$ where $\chi_K$ is the characteristic function of $K$, satisfies that $H^\infty_w(G)$ is a Banach space. The boundary $\partial G$ is not contained in the closure $\overline{E_w}=\overline{E_v \cup K}$ of $E_w$ in $\C$, since is not contained in the closure $\overline{E}_v$ of $E_v$ in $\C$ and $K$ is a compact subset of $G$. Proposition~\ref{prop4improved} implies that $H^\infty_w(G)$ is not a Banach space; a contradiction.
\end{proof}
\par\smallskip
Let $A$ be subset of a domain $G$ in $\C$. We recall that the \textit{holomorphically convex hull of $A$ in $G$} is the set
$$
Hco(A):= \{z \in G \ | \ |f(z)| \leq \sup_{\zeta \in A} |f(\zeta)|\,, \ \forall f \in H(G) \}.
$$
Every domain $G$ in $\C$ is holomorphically convex in the sense that for each compact set $K$ in $G$ the holomorphic convex hull $Hco(K)$ is compact and contained in $G$; \textit{cf.\/} \cite{Hor}. With this concept at hand, we can obtain the following complement of Proposition~\ref{prop4improved} for bounded weights which includes the case $G=\C$. It implies, for example, that $H^\infty_v(\C)$ is not a Banach space if $v$ is a bounded weight on $\C$ such that $E_v$ is relatively compact.
\par
\begin{proposition} \label{prop3}
Let $v: G \rightarrow [0,\infty[$ be a bounded weight on a planar domain $G$. If $H^\infty_v(G)$ is a Banach space, then $G$ coincides with the holomorphic convex hull $Hco(E_v)$ of $E_v:=\{z \in G \ | \ v(z)>0 \}$.
\end{proposition}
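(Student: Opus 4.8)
The inclusion $Hco(E_v) \subseteq G$ holds by the very definition of the holomorphically convex hull, so the plan is to establish the reverse inclusion $G \subseteq Hco(E_v)$ and I would argue by contradiction. Suppose some point $z_0 \in G$ does not lie in $Hco(E_v)$. Then, by definition of the hull, there is a function $f \in H(G)$ with $|f(z_0)| > \sup_{\zeta \in E_v} |f(\zeta)|$; in particular this supremum is finite, so after multiplying $f$ by a suitable positive constant I may assume
$$
\sup_{\zeta \in E_v} |f(\zeta)| < 1 < |f(z_0)| .
$$

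The crucial step, and the one I expect to be the heart of the argument, is to observe that the powers $f^n$ form a \emph{norm-bounded} sequence in $H^\infty_v(G)$. Writing $M := \sup_{z \in G} v(z) < \infty$, I would split $G$ according to $E_v$: on $G \setminus E_v$ the weight vanishes, so $v(z)|f(z)|^n = 0$ there, whereas on $E_v$ one has $|f(z)| < 1$ and hence $v(z)|f(z)|^n \leq M$. Therefore $\Vert f^n \Vert_v \leq M$ for every $n \in \N$, and in particular each $f^n$ belongs to $H^\infty_v(G)$. This is exactly where both hypotheses are used: the region where $f$ is large contributes nothing because $v$ vanishes off $E_v$, and on the remaining region the boundedness of $v$ keeps the norms controlled. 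Note that $f$ itself need not belong to $H^\infty_v(G)$, which is why passing to the powers (rather than testing $\delta_{z_0}$ on $f$ directly) is essential.

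Finally I would invoke completeness. Since $H^\infty_v(G)$ is a Banach space, Proposition~\ref{prop01} yields $\delta_{z_0} \in H^\infty_v(G)'$, and consequently
$$
|f(z_0)|^n = |\delta_{z_0}(f^n)| \leq \Vert \delta_{z_0} \Vert'_v \, \Vert f^n \Vert_v \leq M \, \Vert \delta_{z_0} \Vert'_v
$$
for all $n \in \N$. The right-hand side is a fixed finite constant, while the left-hand side tends to $\infty$ because $|f(z_0)| > 1$. This contradiction shows that no such $z_0$ exists, so $G \subseteq Hco(E_v)$, and combined with the trivial inclusion we obtain $G = Hco(E_v)$.
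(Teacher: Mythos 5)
Your proposal is correct and follows essentially the same route as the paper: normalize the separating function so that it is at most $1$ on $E_v$ and larger than $1$ at $z_0$, show the powers form a norm-bounded sequence in $H^\infty_v(G)$ by splitting $G$ into $E_v$ and its complement, and derive a contradiction from Proposition~\ref{prop01}. The only cosmetic difference is that the paper invokes equivalence (ii) of that proposition (boundedness of the sequence in $(H(G),\tau_{co})$) while you invoke (iv) (boundedness of $\delta_{z_0}$); these give the same contradiction.
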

\begin{proof}
We give a proof by contradiction. Assume there exist a point $z_0 \in G$ and a function $g_0 \in H(G)$ such that
$$
|g_0(z_0)| > \alpha > \sup_{\zeta \in E_v} |g_0(\zeta)|
$$
for some $\alpha>0$. Set $g:=g_0/\alpha \in H(G)$. Then $|g(z_0)|>1$ and $|g(\zeta)| \leq 1$ for each $\zeta \in E_v$.
\par
We show that the sequence $(g^k)_k$ is bounded in $H^\infty_v(G)$. Since $v$ is bounded, there is $M>0$ with $v(z) \leq M$ for each $z \in G$. If $z \notin E_v$, then $v(z)|g^k(z)| =0$ for each $k \in \N$. On the other hand, if $z \in E_v$, then $v(z)|g(z)^k| \leq v(z) \leq M$. Hence $\sup_{k \in \N} \sup_{z \in G} v(z) |g(z)^k| \leq M$.
\par
By assumption $H^\infty_v(G)$ is a Banach space, hence $(g^k)_k$ is a bounded sequence in $(H(G),\tau_{co})$ by Proposition~\ref{prop01} (ii). However, $|g(z_0)^k| \rightarrow \infty$ as $k \rightarrow \infty$, which is absurd.
\end{proof}
\par
Lemma~\ref{holext1} can also be used to get positive results.
\par
\begin{proposition} \label{proplocalholes}
Let $v: G \rightarrow [0,\infty[$ be a  weight on a planar domain $G$. Suppose that for each $z \in G$ there is a bounded open set $U \subset G$ such that $z \in U$, $\partial U \subset E_v$, and $v$ is bounded away from $0$ on $\partial U$. Then $H^\infty_v(G)$ is a Banach space
\end{proposition}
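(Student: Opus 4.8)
The plan is to verify the hypotheses of the completeness criterion in Lemma~\ref{holext1} and then construct the required holomorphic extension by passing through $\tau_{co}$-convergence. First I would record that the standing assumptions force $E_v$ to be non-discrete, so that $H^\infty_v(G)$ is normed and Lemma~\ref{holext1} is applicable. Indeed, for any fixed $z \in G$ the prescribed set $U$ is a bounded, nonempty open subset of $G$, and the boundary of such a set is an infinite compact subset of $\C$ (if $\partial U$ were finite, then $\C \setminus \partial U$ would be connected yet equal to the disjoint union of the two nonempty open sets $U$ and $\C \setminus \overline{U}$, a contradiction). Being infinite and compact, $\partial U$ has a limit point, which lies in $\partial U \subset E_v \subset G$; hence $E_v$ is not discrete.

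Now suppose $f : E_v \to \C$ and a sequence $(f_n)_n$ in $H^\infty_v(G)$ are given with $v(z)|f_n(z)-f(z)| \to 0$ uniformly on $E_v$. As usual this makes $(f_n)_n$ a $\|\cdot\|_v$-Cauchy sequence, so that $\sup_{z \in E_v} v(z)|f_n(z)-f_k(z)| \to 0$ as $n,k \to \infty$. My goal is to exhibit a holomorphic extension of $f$ to all of $G$, and the cleanest route is to prove that $(f_n)_n$ is in fact Cauchy for $\tau_{co}$ and then to invoke the completeness of $(H(G),\tau_{co})$.

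For the key step I would fix a compact set $K \subset G$ and, for each $z \in K$, take the open set $U_z$ furnished by the hypothesis, with $z \in U_z$, $\partial U_z \subset E_v$, and $v \geq c_z > 0$ on $\partial U_z$. Since $\partial U_z \subset E_v$, dividing the $\|\cdot\|_v$-Cauchy estimate by $c_z$ shows that $(f_n)_n$ is uniformly Cauchy on $\partial U_z$. Crucially, $\overline{U_z} = U_z \cup \partial U_z \subset G$, so each difference $f_n - f_k$ is holomorphic on a neighbourhood of the compact set $\overline{U_z}$, and the maximum modulus principle on the bounded set $U_z$ upgrades uniform Cauchyness on $\partial U_z$ to uniform Cauchyness on all of $\overline{U_z}$. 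Extracting a finite subcover $U_{z_1},\dots,U_{z_m}$ of $K$ and noting $K \subset \bigcup_i \overline{U_{z_i}}$, I conclude that $(f_n)_n$ is uniformly Cauchy on $K$. As $K$ was arbitrary, $(f_n)_n$ is $\tau_{co}$-Cauchy and converges to some $F \in H(G)$. Finally, for each $z \in E_v$ we have $v(z)>0$, so $v(z)|f_n(z)-f(z)| \to 0$ forces $f_n(z)\to f(z)$, while $\tau_{co}$-convergence gives $f_n(z)\to F(z)$; hence $F=f$ on $E_v$, and $F$ is the desired holomorphic extension, so Lemma~\ref{holext1} yields completeness.

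The main obstacle, and the only place where the hypothesis is genuinely used, is the passage from boundary control to interior control. Everything hinges on the twin observations that $\overline{U_z} \subset G$ (so the maximum modulus principle legitimately applies on $\overline{U_z}$) and that $v$ is bounded below along $\partial U_z \subset E_v$ (so the available $\|\cdot\|_v$-information on $E_v$ translates into honest uniform Cauchyness on $\partial U_z$, and thence on $\overline{U_z}$). Once these are in place, compactness reduces the global convergence to finitely many such sets, and the rest is routine.
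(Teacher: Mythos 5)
Your proof is correct, and it rests on the same two pillars as the paper's: Lemma~\ref{holext1} as the completeness criterion, and the maximum modulus principle to convert the uniform Cauchy estimate on $\partial U \subset E_v$ (where $v \geq c > 0$) into a uniform Cauchy estimate on $\overline{U} \subset G$. The only substantive difference is in how the local information is globalized. The paper defines, for each admissible $U$, a local limit $f_U$ holomorphic on $U$ and then glues these together by checking that $f_U$ and $f_V$ agree on overlaps (using $\partial(U\cap V)\subset\partial U\cup\partial V\subset E_v$). You instead cover an arbitrary compact $K$ by finitely many $U_{z_i}$, conclude that $(f_n)_n$ is uniformly Cauchy on $K$, and invoke the completeness of $(H(G),\tau_{co})$ to produce a single global limit $F$ in one step; this sidesteps the overlap-compatibility discussion entirely and is arguably cleaner. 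You also supply a detail the paper leaves implicit: the verification that $E_v$ is non-discrete (via the observation that the boundary of a nonempty bounded open subset of $\C$ is an infinite compact set), which is needed for $H^\infty_v(G)$ to be normed and for Lemma~\ref{holext1} to apply. Both arguments are sound; yours trades the local-to-global patching for a compactness argument at the level of the Cauchy sequence itself.
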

\begin{proof}
Let $f:E_v \rightarrow \C$ be a function and let $(f_n)_n$ be a sequence in $H^\infty_v(G)$ such that $v(z)|f_n(z) - f(z)| \rightarrow 0$ uniformly on $E_v$. By Lemma~\ref{holext1} it suffices to prove that $f$ has a holomorphic extension to $G$. Let $U \subset G$ be a non-empty bounded open subset such that $\partial U \subset E_v$ and $v$ is bounded away from $0$ on $\partial U$. Then $v(z)|f_n(z) - f(z)| \rightarrow 0$ uniformly on $\partial U$ and hence $|f_n(z) - f_m(z)| \rightarrow 0$ as $n,m \rightarrow \infty$ uniformly on $\partial U$. The maximum modulus principle implies that $(f_n)_n$ converges uniformly on $\overline{U}$ to a function $f_U$ that is holomorphic on $U$, continuous on $\overline{U}$ and that coincides with $f$ on $\overline{U} \cap E_v$.
\par
Now, let $U,V$ be two bounded open sets with $U \cap V \neq \emptyset$ such that $\partial U \cup \partial V \subset E_v$ and $v$ is bounded away from $0$ on $\partial U$ and $\partial V$. Then $\partial(U \cap V) \subset \partial U \cup \partial V \subset E_v$ and $v$ is bounded away from $0$ on $\partial(U \cap V)$. Thus the three holomorphic functions $f_U$, $f_V$ and $f_{U \cap V}$ are defined. Since $f_U$ and $f_V$ agree on $\partial(U \cap V)$, they agree on $U \cap V$. This shows that if $U \subset G$ is a bounded set containing $z$ such that $\partial U \subset E_v$ and $v$ is bounded away from $0$ on $\partial U$, by setting $g(z):= f_U(z), z \in G,$ we define in a unique way a holomorphic function $g$ on $G$ that coincides with $f$ on $E_v$.
\end{proof}
\begin{corollary}\label{cor2}
Let $v: G \rightarrow [0,\infty[$ be a continuous weight on a planar domain $G$ such that $G\setminus E_v$ is discrete, \textit{i.e.\/}, the zeros of $v$ are isolated. Then $H^\infty_v(G)$ is a Banach space.
\end{corollary}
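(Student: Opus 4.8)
The plan is to reduce the corollary to Proposition~\ref{proplocalholes} by verifying its geometric hypothesis at every point of $G$. That is, for each $z \in G$ I must exhibit a bounded open set $U \subset G$ with $z \in U$, $\partial U \subset E_v$, and $v$ bounded away from $0$ on $\partial U$. The natural candidate is a small disk $U := B(z,r)$, and the only thing to arrange is that its bounding circle misses every zero of $v$.

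First I would fix $z \in G$ and choose $r_0 > 0$ so that $\overline{B}(z,r_0) \subset G$, which is possible since $G$ is open. The zero set $Z := G \setminus E_v = \{ w \in G \ | \ v(w)=0 \}$ is closed in $G$ because $v$ is continuous, and it is discrete by hypothesis. Consequently its intersection with the compact set $\overline{B}(z,r_0)$ is a discrete subset of a compact set that is closed in $G$, hence finite, say $\{w_1, \dots, w_k\}$. These points lie at finitely many distances $d_i := |w_i - z|$ from $z$, so I may pick a radius $r \in (0,r_0)$ with $r \neq d_i$ for every $i$. Then the circle $\partial B(z,r)$ contains no zero of $v$, i.e.\ $\partial B(z,r) \subset E_v$, and since $\partial B(z,r) \subset \overline{B}(z,r_0) \subset G$ the boundary lies inside $G$.

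Next I would observe that $v$ is continuous and strictly positive on the compact circle $\partial B(z,r)$, hence attains a positive minimum there; this is precisely the statement that $v$ is bounded away from $0$ on $\partial B(z,r)$. Thus $U := B(z,r)$ is a bounded open subset of $G$ containing $z$ with $\partial U \subset E_v$ and $v$ bounded away from $0$ on $\partial U$. Since $z \in G$ was arbitrary, the hypotheses of Proposition~\ref{proplocalholes} are met, and that proposition yields directly that $H^\infty_v(G)$ is a Banach space.

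The argument is essentially a routine verification, and I do not expect a serious obstacle. The only point requiring a moment of care is the choice of the radius $r$: it rests on the fact that a discrete subset of $G$ that is closed in $G$ meets each compact disk in only finitely many points, so that merely finitely many ``forbidden'' radii must be avoided. (Note also that the hypotheses force $E_v$ to contain circles, so $E_v$ is non-discrete and the space is normed, as is implicitly required in Proposition~\ref{proplocalholes} via Lemma~\ref{holext1}.)
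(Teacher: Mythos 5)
Your proof is correct and follows exactly the route the paper takes: the paper's proof of Corollary~\ref{cor2} consists of the single sentence that it is a direct consequence of Proposition~\ref{proplocalholes}, and your argument merely (and correctly) fills in the routine verification that small circles avoiding the finitely many zeros in a compact disk satisfy the hypotheses of that proposition.
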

\begin{proof}
This is a direct consequence of Proposition~\ref{proplocalholes}.
\end{proof}
\par
As a consequence of Corollary~\ref{cor2}, if $F \in H(G)$ is a non-zero holomorphic function on a planar domain $G$ and $v(z):=|F(z)|, z \in G,$ then $H^\infty_v(G)$ is a Banach space. This example is mentioned in \cite[Example 3]{N}.
\begin{corollary}\label{cor3}
Let $v: G \rightarrow [0,\infty[$ be a weight on a planar domain $G$ such that $\overline{G\setminus E_v}$ is a compact  subset of $G$.
If $\inf_{z \in K} v(z) >0$ for each compact subset $K \subset G \setminus (\overline{G\setminus E_v})$, then $H^\infty_v(G)$ is a Banach space.
\end{corollary}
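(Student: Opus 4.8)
The plan is to reduce this to Proposition~\ref{proplocalholes}: that result yields completeness of $H^\infty_v(G)$ as soon as we produce, for every $z \in G$, a bounded open set $U \subset G$ with $z \in U$, $\partial U \subset E_v$, and $v$ bounded away from $0$ on $\partial U$. Write $Z := G \setminus E_v$ for the zero set of $v$, so that the hypotheses read: $\overline{Z}$ is a compact subset of $G$, and $\inf_{z \in K} v(z) > 0$ for every compact $K \subset G \setminus \overline{Z}$. Observe also that $G \setminus \overline{Z} \subset G \setminus Z = E_v$, so every point lying off the closed zero set already belongs to $E_v$. The construction of $U$ then splits naturally according to whether $z$ lies off the zero set or on it.

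For a point $z \in G \setminus \overline{Z}$, I would simply take $U := B(z,\rho)$ with $\rho$ small enough that the closed disk $\overline{B(z,\rho)}$ is contained in the open set $G \setminus \overline{Z}$. Then $\overline{U}$ is a compact subset of $G \setminus \overline{Z}$, so the hypothesis gives $\inf_{\overline{U}} v > 0$; in particular $\partial U \subset \overline{U} \subset E_v$ and $v$ is bounded away from $0$ on $\partial U$, exactly as required.

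The genuinely delicate case is $z \in \overline{Z}$, where $v$ may vanish and no small neighbourhood of $z$ can serve. The key idea is to stop working locally and instead enclose the \emph{entire} compact zero set inside one open set whose boundary avoids it. Since $\overline{Z}$ is compact and contained in the open set $G$, the number $\delta_0 := \mathrm{dist}(\overline{Z}, \C \setminus G)$ is strictly positive (with the convention $\delta_0 = +\infty$ when $G = \C$). Fix $0 < \delta < \delta_0$ and set $U := \{ w \in \C : \mathrm{dist}(w, \overline{Z}) < \delta \}$. Then $U$ is open and bounded (as $\overline{Z}$ is bounded), $\overline{Z} \subset U$, and $\overline{U} \subset G$ by the choice of $\delta$. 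Because $\overline{Z} \subset U$ and $U$ is open, $\partial U$ is disjoint from $\overline{Z}$; hence $\partial U \subset G \setminus \overline{Z} \subset E_v$, and $\partial U$ is a compact subset of $G \setminus \overline{Z}$, so the hypothesis again gives $\inf_{\partial U} v > 0$. This single set $U$ works simultaneously for every $z \in \overline{Z}$.

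With both cases in hand, the hypothesis of Proposition~\ref{proplocalholes} holds at every $z \in G$, and the completeness of $H^\infty_v(G)$ follows. The only step that needs care is the second one: the natural temptation is to build a neighbourhood around a zero of $v$, which cannot meet the boundary requirements, whereas the correct move is to exploit the compactness of the whole zero set in order to push $\partial U$ entirely into the region $G \setminus \overline{Z}$ where $v$ is controlled from below.
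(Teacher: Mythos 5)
Your proof is correct and follows essentially the same route as the paper's: a small disk around each point of $G\setminus\overline{G\setminus E_v}$, and a single bounded open neighbourhood of the whole compact zero set (the paper merely "selects" such a set $V$ where you construct it explicitly as a $\delta$-neighbourhood), followed by an appeal to Proposition~\ref{proplocalholes}.
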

\begin{proof}
We select a bounded open set $V$ whose closure $\overline{V}$ is a compact subset of $G$ and such that the compact subset  $\overline{G\setminus E_v}$ of $G$ is contained in $V$. The boundary $\partial V$ of $V$ is also a compact subset of $G$ and $\inf_{z \in \partial V} v(z) >0$ by assumption. If $z \in \overline{G\setminus E_v}$, we take the open set $V$ to get $z \in V$ and $v$ is bounded away from $0$ on $\partial V$. If $z \notin \overline{G\setminus E_v}$, it is enough to take an open disk $U$ centered at $z$ whose closure does not meet $\overline{G\setminus E_v}$. By assumption $\inf_{z \in \partial U} v(z) >0$. The conclusion follows from Proposition~\ref{proplocalholes}.
\end{proof}
\par
The assumptions of Corollary \ref{cor3} are satisfied if $v$ is the characteristic function of a subset $A$ of $G$ such that $G \setminus A$ is a compact subset of $G$.
\begin{corollary}\label{cor4}
Let $G=\D$ (resp.\  $G=\C$). Let $v$ be a bounded radial weight on $G$. The space $H^\infty_v(G)$ is a Banach space if and only if $E_v$ is not compact in $G$ or, equivalently, if and only if there is an increasing sequence $(r_k)_k$ in $]0,1[$ tending to $1$ (resp.
$(r_k)_k$ in $]0,\infty[$ tending to $\infty$) such that $v(r_k)>0$ for each $k \in \N$.
\par
In particular, if $v(z):=|F(|z|)|, z \in G,$ for a non-zero function $F \in H(G)$, then $H^\infty_v(G)$ is a Banach space.
\end{corollary}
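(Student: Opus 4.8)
The plan is to read the equivalence off the radial structure of $v$ and then to invoke the machinery already in place, so that essentially no new computation is required. Since $v$ is radial, set $R:=\{r>0 : v(r)>0\}$; then $E_v$, apart from a possible isolated point at the origin, is the union of the circles $\{z : |z|=r\}$ with $r\in R$. Consequently $E_v$ is relatively compact in $G$ if and only if $\sup R<1$ (resp.\ $\sup R<\infty$), which is exactly the negation of the existence of an increasing sequence $(r_k)_k$ tending to the boundary with $v(r_k)>0$; this settles the equivalence of the two geometric formulations. I would also record that as soon as a single circle lies in $E_v$ the set is non-discrete, so $H^\infty_v(G)$ is normed by Proposition~\ref{prop1}, while the degenerate case $E_v\subseteq\{0\}$ falls on the ``compact, hence not even normed'' side and is trivial.

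For the sufficiency direction I would apply Proposition~\ref{proplocalholes}. Given the sequence $r_k\to 1$ (resp.\ $\to\infty$) with $v(r_k)>0$, fix $z\in G$ and choose $r_k>|z|$. The ball $U:=B(0,r_k)$ is a bounded open subset of $G$ containing $z$ whose boundary $\partial U=\{|w|=r_k\}$ lies in $E_v$, and on that circle $v$ is the positive constant $v(r_k)$, hence bounded away from $0$. Proposition~\ref{proplocalholes} then yields that $H^\infty_v(G)$ is a Banach space; notice that boundedness of $v$ is not used here.

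For the necessity direction I would argue by contraposition, assuming $E_v$ relatively compact in $G$, i.e.\ $\sup R<1$ (resp.\ $<\infty$). Since $v$ is bounded, the constant function $1$ belongs to $H^\infty_v(G)$, so the space is non-trivial. If $G=\D$, then $\overline{E}_v$ is a compact subset of $\D$ and Corollary~\ref{cor1}(2) gives that $H^\infty_v(\D)$ is not a Banach space; if $G=\C$, then $E_v$ is bounded and the consequence of Proposition~\ref{prop3} noted just before its statement gives that $H^\infty_v(\C)$ is not a Banach space. The one genuine point of care—what I expect to be the main, though modest, obstacle—is precisely that the two domains must be routed through different earlier results: Corollary~\ref{cor1} explicitly excludes the plane, so the case $G=\C$ is forced to go through the holomorphically-convex-hull argument of Proposition~\ref{prop3}.

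Finally, for the ``in particular'' claim with $v(z):=|F(|z|)|$ and $F\in H(G)$ non-zero, I would observe that the real zeros $\{r : F(r)=0\}$ are isolated, so there are radii $r$ arbitrarily close to the boundary with $F(r)\neq0$, that is $v(r)>0$; thus $E_v$ is not compact in $G$. Since $v(z)=|F(|z|)|$ need not be bounded (for instance $F(z)=1/(1-z)$ on $\D$ gives $v(z)=1/(1-|z|)$), I would not pass through the main equivalence but instead apply Proposition~\ref{proplocalholes} directly, exactly as in the sufficiency step, taking $U=B(0,r)$ with $r>|z|$ and $F(r)\neq0$; this delivers that $H^\infty_v(G)$ is a Banach space with no boundedness hypothesis on $v$.
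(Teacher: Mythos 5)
Your proposal is correct and follows essentially the same route as the paper, whose proof is a one-line citation of Proposition~\ref{proplocalholes} (for sufficiency, via the disks $B(0,r_k)$), Corollary~\ref{cor1}~(2) (for necessity when $G=\D$), and Proposition~\ref{prop3} (for necessity when $G=\C$). Your additional remarks — that the two domains must be routed through different necessity results, and that the ``in particular'' case should bypass the boundedness hypothesis by invoking Proposition~\ref{proplocalholes} directly — are accurate and, if anything, spell out details the paper leaves implicit.
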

\begin{proof}
This is a direct consequence of Proposition~\ref{proplocalholes} and Corollary~\ref{cor1} (2) (for $G \neq \C$) and Proposition~\ref{prop3} (for $G=\C$).
\end{proof}
\par
Let $v$ be the weight on $\D$ defined by $v(z):= a_n >0$ if $|z|=1-(1/n)$, and $v(z)=0$ otherwise. Then $H^\infty_v(G)$ is a Banach space by Corollary~\ref{cor4}. Observe that the sequence $(a_n)_n \subset ]0,\infty[$ need not be bounded. Similar examples can be obtained by replacing $\D$ by $\C$ and $1-(1/n)$ by $n$, $n \in \N$.
\begin{proposition} \label{prop7}
Let $F \in H(G)$ be a non-zero function on a planar domain $G$.
Define $v(z):=0$ if $F(z)=0$ and $v(z):=1/|F(z)|$ if $F(z) \neq 0$. Then $H^\infty_v(G)$ is a Banach space that coincides with the set of all $f \in H(G)$ such that there is $C=C(f)>0$ with $|f(z)| \leq C |F(z)|$ for each $z \in G$.
\end{proposition}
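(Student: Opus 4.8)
The plan is to realize $H^\infty_v(G)$, by division by $F$, as an isometric copy of the space $H^\infty(G)$ of bounded holomorphic functions, which is known to be a Banach space; completeness then transfers automatically.

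First I would record the structure of $E_v$. Since $F \in H(G)$ is non-zero and $G$ is a domain, the zero set $Z := \{z \in G : F(z)=0\}$ is discrete with no limit point in $G$, so $E_v = G \setminus Z$ is dense in $G$; in particular $E_v$ has limit points in $G$, and $H^\infty_v(G)$ is normed by Proposition~\ref{prop1}. Next I would settle the set description. For $f \in H(G)$ one has
$$\|f\|_v = \sup_{z \in E_v} \frac{|f(z)|}{|F(z)|},$$
so $f \in H^\infty_v(G)$ if and only if there is $C>0$ with $|f(z)| \leq C|F(z)|$ for every $z \in E_v$. At a point $z_0 \in Z$, letting $z \to z_0$ within $E_v$ and invoking continuity gives $|f(z_0)| \leq C|F(z_0)| = 0$; hence the inequality in fact holds on all of $G$, and conversely any $f$ satisfying it clearly lies in $H^\infty_v(G)$. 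This proves the asserted description and shows that $\|f\|_v$ is the least admissible constant $C$.

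For completeness I would introduce the division map. Given $f \in H^\infty_v(G)$ with $\|f\|_v = C$, the quotient $g := f/F$ is holomorphic on $E_v$ and satisfies $|g| \leq C$ there. Since each zero of $F$ is isolated and $g$ is bounded on a punctured neighbourhood of it, Riemann's removable singularity theorem extends $g$ to a holomorphic function on all of $G$, still bounded by $C$; thus $g \in H^\infty(G)$ and, because $E_v$ is dense, $\|g\|_\infty = \sup_{E_v}|g| = \|f\|_v$. Conversely, for $g \in H^\infty(G)$ the product $gF$ lies in $H^\infty_v(G)$ with $\|gF\|_v = \sup_{E_v}|g| = \|g\|_\infty$. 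Hence $f \mapsto f/F$ is a surjective linear isometry from $H^\infty_v(G)$ onto $H^\infty(G)$, with inverse $g \mapsto gF$.

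Since $H^\infty(G)$ with $\|\cdot\|_\infty$ is a Banach space (it is $H^\infty_{\tilde v}(G)$ for the constant weight $\tilde v \equiv 1$, complete as noted at the start of the proof of Theorem~\ref{thm2}), the isometric isomorphism transports completeness to $H^\infty_v(G)$, which is therefore a Banach space. The only delicate point is the division step: one must verify that $f/F$ genuinely extends holomorphically across the zeros of $F$. This is exactly what the membership condition $|f| \leq C|F|$ guarantees, since it forces $f$ to vanish at each zero of $F$ at least to the order of $F$ there, so that $f/F$ stays bounded and the removable singularity theorem applies.
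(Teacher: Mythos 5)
Your proposal is correct, but it proves completeness by a genuinely different route than the paper. The paper disposes of the proposition in two lines by citing machinery already established: the zeros of $F$ are isolated, so $H^\infty_v(G)$ is normed by Proposition~\ref{prop1}, and around every point one can place a small disk whose boundary circle avoids the zero set of $F$ and on which $v=1/|F|$ is bounded away from $0$; Proposition~\ref{proplocalholes} then gives completeness directly. The set description is handled afterwards exactly as you do, via continuity at the zeros of $F$. You instead exhibit an explicit surjective linear isometry $f\mapsto f/F$ from $H^\infty_v(G)$ onto $H^\infty(G)$, using Riemann's removable singularity theorem (the bound $|f/F|\le \|f\|_v$ on $E_v$ makes the quotient bounded near each isolated zero) and the density of $E_v$ to get the isometry, and then transport completeness from $H^\infty(G)$. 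Both arguments are sound. The paper's is shorter because it reuses Proposition~\ref{proplocalholes}; yours is self-contained apart from standard facts and yields strictly more information, namely that $H^\infty_v(G)=F\cdot H^\infty(G)$ isometrically, from which the asserted description of the space falls out at once rather than requiring a separate verification.
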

\begin{proof}
The weight $v$ is in general unbounded and not continuous, but it has isolated zeros. Then $H^\infty_v(G)$ is a normed space by Proposition~\ref{prop1} and a Banach space by Proposition~\ref{proplocalholes}.
\par
Now we prove the other part of the statement. If $f \in H(G)$ satisfies $|f(z)| \leq C |F(z)|$ for each $z \in G$, then $f(z)=0$ whenever $F(z)=0$. Moreover, $v(z)|f(z)| \leq C v(z)|F(z)|$ for each $z \in G$, hence $f \in H^\infty_v(G)$ and $\Vert f \Vert_v \leq C$. On the other hand, if $f \in H^\infty_v(G)$ satisfies $\Vert f \Vert_v = D>0$, then $|f(z)| \leq D|F(z)|$ if $F(z) \neq 0$. Since both $f$ and $F$ are continuous and the zeros of $F$ are isolated, this inequality implies that $f(z)=0$ whenever $F(z)=0$. Therefore $f \in H^\infty_v(G)$ if and only if there is $C>0$ with $|f(z)| \leq C |F(z)|$ for each $z \in G$.
\end{proof}
\begin{remark}
Observe, for the weight $v$ on $G$ considered in Proposition~\ref{prop7}, that we have $\Vert \delta_z \Vert'_v = 0$ if $F(z)=0$ (since each $f \in H^\infty_v(G)$ vanishes on such $z \in G$), and $\Vert \delta_z \Vert'_v = |F(z)|$ if $F(z) \neq 0$.
In fact, $F \in H^\infty_v(G)$ and $\Vert F \Vert_v =1$, thus $\Vert \delta_z \Vert'_v \geq |F(z)|$. Moreover, if $f \in B^\infty_v$, then $|f(z)| \leq |F(z)|$, which yields
$\Vert \delta_z \Vert'_v \leq |F(z)|$. Therefore $\Vert \delta_z \Vert'_v = |F(z)|$ for each $z \in G$.
It follows that the weight $\tilde{v}(z):=1/\Vert \delta_z \Vert'_v$ associated with this particular weight $v$ and constructed in the proof of Theorem~\ref{thm2}, is in general not defined on the whole set $G$ and is unbounded on the set on which it is defined. This shows that the assumption that the weight $v$ is bounded cannot be omitted  in Theorem~\ref{thm2}.
\end{remark}
\par\smallskip
The following result complements Proposition~\ref{proplocalholes}. It allows us to construct examples of Banach spaces $H^\infty_v(\D)$ for a weight $v$ which, in each circle of radius $1-(1/n)$, $n \in \N$, takes a strictly positive value $\alpha_n$ on a dense subset $D_n$ of the circle and is $0$ outside the union of the sets $D_n, n \in \N$.
\begin{proposition}\label{prop6}
Let $v: G \rightarrow [0,\infty[$ be a weight on a planar domain $G$.
Assume that for each compact set $K \subset G$ there is a compact set $L$ such that $K \subset L \subset G$ and a number $\alpha >0$ such that
$$
\partial L \subset \overline{ \{z \in G \ | \ v(z) \geq \alpha \} }.
$$
Then $H^\infty_v(G)$ is a Banach space.
\end{proposition}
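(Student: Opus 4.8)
The plan is to apply the completeness criterion of Lemma~\ref{holext1}, so I would first check that $H^\infty_v(G)$ is normed, i.e. that $E_v$ is not discrete (Proposition~\ref{prop1}). Applying the hypothesis to a closed disk $K \subset G$ produces a compact $L$ with $K \subset L \subset G$ and $\partial L \subset \overline{\{z \in G \mid v(z)\ge\alpha\}}$ for some $\alpha>0$. Since $L$ is compact with nonempty interior (it contains the disk) and $L\neq\C$, its boundary must be infinite: writing $\C\setminus\partial L = \mathrm{int}(L)\sqcup(\C\setminus L)$ as a disjoint union of two nonempty open sets, a finite $\partial L$ would force the connected set $\C\setminus\partial L$ to disconnect, a contradiction. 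As $\partial L\subset\overline{\{v\ge\alpha\}}$ is infinite, the set $\{v\ge\alpha\}$ is itself infinite (a finite set is closed), and being contained in the compact $L$ it has a limit point in $G$; hence $E_v\supset\{v\ge\alpha\}$ is non-discrete and $H^\infty_v(G)$ is normed.

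For completeness, by Lemma~\ref{holext1} I would take a function $f:E_v\to\C$ together with a sequence $(f_n)$ in $H^\infty_v(G)$ such that $v(z)|f_n(z)-f(z)|\to 0$ uniformly on $E_v$, and show that $f$ extends holomorphically to $G$. Fix an arbitrary compact $K\subset G$ and choose $L$ and $\alpha$ from the hypothesis. On $\{v\ge\alpha\}$ we have $|f_n-f|\le\alpha^{-1}v|f_n-f|$, so $f_n\to f$ uniformly there and $(f_n)$ is uniformly Cauchy on $\{v\ge\alpha\}$. Each $f_n-f_m$ is continuous, so this estimate passes to the closure, giving $\sup_{\partial L}|f_n-f_m|\le\sup_{\{v\ge\alpha\}}|f_n-f_m|\to 0$ since $\partial L\subset\overline{\{v\ge\alpha\}}\cap G$.

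Next I would invoke the maximum modulus principle on $L$ to control $f_n-f_m$ on all of $L$ by its values on $\partial L$. The point to verify is that $\sup_L|h|\le\sup_{\partial L}|h|$ for $h$ holomorphic on $G$: writing $L=\mathrm{int}(L)\sqcup\partial L$ and noting that every connected component $\Omega$ of $\mathrm{int}(L)$ satisfies $\partial\Omega\subset\partial L$ (a boundary point of $\Omega$ lying in $\mathrm{int}(L)$ would have to lie in $\Omega$ itself, since components are open and disjoint), the usual maximum principle on each $\Omega$ yields the estimate. Consequently $\sup_K|f_n-f_m|\le\sup_L|f_n-f_m|\le\sup_{\partial L}|f_n-f_m|\to 0$, so $(f_n)$ is uniformly Cauchy on every compact subset of $G$ and therefore converges in $(H(G),\tau_{co})$ to some $g\in H(G)$. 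For $z\in E_v$ the relation $v(z)|f_n(z)-f(z)|\to 0$ with $v(z)>0$ forces $f_n(z)\to f(z)$, while also $f_n(z)\to g(z)$; hence $g=f$ on $E_v$, and $g$ is the desired holomorphic extension. By Lemma~\ref{holext1}, $H^\infty_v(G)$ is complete.

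The step I expect to require the most care is the bookkeeping around the two passages to closure: transferring the quantitative uniform Cauchy bound from $\{v\ge\alpha\}$ (where it is given) to $\partial L$ (which only lies in the \emph{closure} of that set), and then propagating it through the maximum modulus principle on the possibly irregular compact set $L$ down to the prescribed compact $K$. Justifying the maximum principle on a general compact $L$, rather than on the closure of a smoothly bounded domain, is the delicate point, and it is handled by the component decomposition of $\mathrm{int}(L)$ described above.
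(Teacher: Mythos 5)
Your proof is correct, but it takes a genuinely different route from the paper's. The paper works with the completeness criterion of Proposition~\ref{prop01}(iii): it first enlarges $L$ to a slightly bigger compact set $M\subset G$ so that $S:=\overline{\{z\in M \mid v(z)\ge\alpha\}}$ is a compact subset of $G$ containing $\partial L$, deduces $K\subset Hco(S)$ from the maximum principle, and then bounds every $f$ in the unit ball by $1/\alpha$ on $K$, so that $B_v^\infty$ is $\tau_{co}$-bounded. You instead invoke the function-theoretic criterion of Lemma~\ref{holext1} and show that any sequence $(f_n)$ with $v|f_n-f|\to 0$ uniformly on $E_v$ is uniformly Cauchy on each compact set, via the chain $\sup_K\le\sup_{\partial L}\le\sup_{\{v\ge\alpha\}}$; this mirrors the paper's proof of Proposition~\ref{proplocalholes} rather than its proof of this proposition. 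Your version buys a small simplification: by evaluating the continuous function $f_n-f_m$ only at points of $\partial L\subset G$ and approximating them by points of $\{v\ge\alpha\}\subset G$, you avoid the auxiliary compact $M$ that the paper needs to keep the relevant closure inside $G$. The paper's version buys the explicit quantitative bound $\sup_{z\in K}|f(z)|\le 1/\alpha$ for $f\in B_v^\infty$. Your careful treatment of the maximum principle on the possibly irregular compact $L$ (via components of $\mathrm{int}(L)$) and your verification that the space is normed are both welcome details that the paper leaves implicit.

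One sentence of yours is inaccurate, though the conclusion it serves is fine: the set $\{z\in G\mid v(z)\ge\alpha\}$ need \emph{not} be contained in $L$ (the hypothesis only places $\partial L$ in its closure), so you cannot argue that it has a limit point because it sits inside a compact set. The repair is immediate: $\partial L$ is an infinite compact subset of $G$, hence has a limit point $w\in\partial L\subset G$, and since every point of $\partial L$ lies in $\overline{\{v\ge\alpha\}}$, every neighbourhood of $w$ contains points of $\{v\ge\alpha\}\subset E_v$ other than $w$; thus $E_v$ is non-discrete and Proposition~\ref{prop1} applies.
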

\begin{proof}
We first prove that the assumption implies that for each compact set $K \subset G$ there is a compact set $M$ such that $K \subset M \subset G,$ and there is a positive constant $\alpha$ such that $ K$ is contained in the holomorphic convex hull $Hco \overline{ \{z \in M \ | \ v(z) \geq \alpha \} }$ of $\overline{ \{z \in M \ | \ v(z) \geq \alpha \}}$. To see this, fix a compact set $K \subset G$. We apply the assumption to find a compact set $L$ containing $K$ and $\alpha$ such that $\partial L \subset \overline{ \{z \in G \ | \ v(z) \geq \alpha \} }$. If $G=\C$, take $d=1$, and if $G \neq \C$, set $d:={\rm dist}(L, \C\setminus G)$. The set $M:=\{x \in \C \ | \ {\rm dist}(x,L) \leq d/2 \}$ is compact, contained in $G$ and $L \subset M$. Define $S:= \overline{ \{z \in M \ | \ v(z) \geq \alpha \}} \subset M$. We show that $\partial L \subset S$. Indeed, if $z \in \partial L$, there is a sequence $(x_j)_j \subset G$ such that $v(x_j) \geq \alpha$ for each $j \in \N$ and $x_j \rightarrow z$ as $j \rightarrow \infty$. There is $J \in \N$ such that, for $j \geq J$, ${\rm dist}(x_j,L) \leq |x_j - z| < d/2$. If $j\geq J$, then $x_j \in M$ and $v(x_j) \geq \alpha$, which means $x_j \in S$. This implies $z \in S$.
\par
Now, if $z \in K \subset L$ and $f \in H(G)$, we can apply the maximum principle to get
$$
|f(z)| \leq \sup_{\zeta \in L} |f(\zeta)| = \sup_{\zeta \in \partial L} |f(\zeta)| \leq \sup_{\zeta \in S} |f(\zeta)|.
$$
This implies that $K \subset Hco(S)$.
\par
Now we proceed to prove that the closed unit ball $B_v^\infty$ of $H^\infty_v(G)$ is bounded in $(H(G),\tau_{co})$; the conclusion will follow from Proposition~\ref{prop01}.
\par
Given a compact set $K \subset G$ we apply the first part of the proof to find the compact set $M \subset G$ and $\alpha>0$. Set $R:= \{z \in M \ | \ v(z) \geq \alpha \}$. If $f \in B_v^\infty$ and $z \in K$, then $z \in Hco(\overline{R})$. Thus, since $f$ is continuous and $\overline{R} \subset M \subset G$, we get
$$
|f(z)| \leq \sup_{\zeta \in \overline{R}}|f(\zeta)| = \sup_{\zeta \in R} |f(\zeta)|\,, \quad \forall z\in K\,.
$$
If $\zeta \in R$, then $v(\zeta) \geq \alpha$. This implies $\alpha |f(\zeta)| \leq v(\zeta)|f(\zeta)| \leq 1$ for each $\zeta \in R$. Hence $\sup_{\zeta \in R} |f(\zeta)| \leq 1/\alpha$. Therefore $|f(z)| \leq 1/\alpha$ for each $z \in K$ and each $f \in B_v^\infty$,
which implies that $B_v^\infty$ is bounded in $(H(G),\tau_{co})$.
\end{proof}
\par\smallskip
The reader should notice that there are various situations which are not covered by the above results. We include some of them in this final part of the paper.
\begin{proposition}\label{prop-int}
Let $v: G \rightarrow [0,\infty[$ be a weight on a planar domain $G$ with the property that, for every point $z$ in $G$, there exist positive numbers $p_z$ and $R_z$ satisfying $R_z<\mathrm{dist\,}(z,\partial G)$ and
$$
 \int_0^{2\pi} \frac{1}{(v(z + R_z e^{i\theta}))^{p_z}} \frac{d \theta}{2 \pi} < +\infty.
$$
Then $H^\infty_v(G)$ is a Banach space.
\end{proposition}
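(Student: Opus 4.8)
The plan is to verify condition (iii) of Proposition~\ref{prop01}, namely that the closed unit ball $B^\infty_v$ is bounded in $(H(G),\tau_{co})$; completeness of $H^\infty_v(G)$ then follows at once. First observe that the hypothesis forces $v>0$ almost everywhere on each circle $\{z+R_z e^{i\theta}:\theta\in[0,2\pi)\}$, since otherwise the integrand $1/v^{p_z}$ would be $+\infty$ on a set of positive measure and the integral would diverge. In particular $E_v$ contains (almost all of) such a circle, hence is not discrete, so by Proposition~\ref{prop1} the space $H^\infty_v(G)$ is normed and Proposition~\ref{prop01} applies.

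The key analytic input I would use is that for holomorphic $f$ and any $p>0$ the function $|f|^p$ is subharmonic: $\log|f|$ is subharmonic and $t\mapsto e^{pt}$ is convex and increasing, so $|f|^{p}=\exp(p\log|f|)$ is subharmonic. Fix $f\in B^\infty_v$ and $z\in G$. Since $R_z<\mathrm{dist}(z,\partial G)$, the closed disk $\overline{B(z,R_z)}$ lies in $G$, so $f$ is holomorphic on a neighborhood of it and $|f|^{p_z}$ is continuous and subharmonic there. At the boundary points where $v>0$ the constraint $\|f\|_v\le1$ gives $|f(z+R_z e^{i\theta})|\le 1/v(z+R_z e^{i\theta})$; raising to the power $p_z$ and integrating (the inequality holding a.e.) yields
$$\frac{1}{2\pi}\int_0^{2\pi}|f(z+R_z e^{i\theta})|^{p_z}\,d\theta \le \frac{1}{2\pi}\int_0^{2\pi} (v(z+R_z e^{i\theta}))^{-p_z}\,d\theta =: C_z<\infty,$$
a bound independent of $f$.

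To upgrade this from the single point $z$ to a whole neighborhood, I would invoke the Poisson majorization of subharmonic functions on $B(z,R_z)$: for every $w=z+\rho e^{i\phi}$ with $\rho<R_z$,
$$|f(w)|^{p_z}\le \frac{1}{2\pi}\int_0^{2\pi} \frac{R_z^2-\rho^2}{R_z^2-2R_z\rho\cos(\theta-\phi)+\rho^2}\,|f(z+R_z e^{i\theta})|^{p_z}\,d\theta.$$
On the concentric disk $\overline{B(z,R_z/2)}$ the Poisson kernel is bounded by $(R_z+\rho)/(R_z-\rho)\le 3$, whence $|f(w)|^{p_z}\le 3\,C_z$, that is $|f(w)|\le (3C_z)^{1/p_z}$ for all $w\in\overline{B(z,R_z/2)}$ and all $f\in B^\infty_v$. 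Thus $B^\infty_v$ is uniformly bounded on each disk $B(z,R_z/2)$. Since any compact $K\subset G$ is covered by finitely many such disks, $\sup_{f\in B^\infty_v}\sup_{w\in K}|f(w)|<\infty$, so $B^\infty_v$ is bounded in $(H(G),\tau_{co})$ and Proposition~\ref{prop01} gives that $H^\infty_v(G)$ is a Banach space.

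The routine part is the mean-value estimate at the center $z$; the step that genuinely needs care, and which I regard as the heart of the argument, is the passage from the single-point bound to a uniform bound on a neighborhood. The plain sub-mean-value inequality only controls $|f(z)|$, which would give mere pointwise boundedness of the family, and pointwise boundedness does not by itself yield boundedness in $\tau_{co}$; it is the Poisson-kernel estimate on a concentric sub-disk that promotes the bound to local uniformity and hence to local boundedness of $B^\infty_v$.
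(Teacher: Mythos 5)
Your proof is correct and follows essentially the same route as the paper: both verify condition (iii) of Proposition~\ref{prop01} by applying the Poisson-integral majorization to the subharmonic function $|f|^{p}$ on the circles of radius $R_z$, bounding the Poisson kernel on concentric half-radius disks, and covering a compact set by finitely many such disks. The only cosmetic difference is that the paper first passes to a single exponent $p=\min_j p_{z_j}$ using the monotonicity of integral means (H\"older), whereas you keep each point's own exponent $p_z$ and take a maximum over the finite cover at the end; both work equally well.
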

\begin{proof}
It suffices to show that the closed unit ball $B_v^\infty$ of $H^\infty_v(G)$ is bounded in $(H(G),\tau_{co})$. The conclusion will then follow from Proposition \ref{prop01}. To this end, fix a compact subset $K$ of $G$. Obviously, $K\subset \cup_{z\in K} B(z,\frac{1}{2}R_z)$ and by compactness we can select finitely many points $z_1,...,z_J \in K$ so that
\begin{equation}
 K\subset \bigcup_{j=1}^J B(z_j,\frac{1}{2}R_{z_j})\,.
 \label{subcover}
\end{equation}
\par
Set $R_j:=R_{z_j}$ and $r_j:=\frac{1}{2}R_{z_j}, j=1,...,J$. Then  $0<r_j<R_j$ for all $j$ and  $K \subset \bigcup_{j=1}^{J} B(z_j,r_{j})$.
Since $dm(\theta)=d\theta/(2\pi)$ is a probability measure on $[0,2\pi]$, an elementary application of H\"older's inequality shows  that for a fixed $v$, $z$, and $R$ the integral means
$$
 \(\int_0^{2\pi} \frac{1}{(v(z + R e^{i\theta}))^{p}} \frac{d \theta}{2 \pi}\)^{1/p}
$$
increase as $p\in (0,\infty)$ increases. Thus, for a given compact subset $K$ and the corresponding points $z_j$ and values $R_j$, $r_j$, and $p_j$ from the assumptions of the statement, by choosing $p=\min\{p_1,\ldots,p_J\}$ it follows that
\begin{equation}
 \int_0^{2\pi} \frac{1}{(v(z_j + R_j e^{i\theta}))^{p}} \frac{d \theta}{2 \pi} =: I_j(K) <+\infty, \qquad j=1,2,\ldots,J.
 \label{int-p}
\end{equation}
If $z \in \overline{B(z_j,r_{j})}$ for some $j=1,...,J,$ then, for each $f$ in the unit ball $B_v^\infty$ of $H_v^\infty (G)$, we get
$$
 |f(z)|^p \leq \frac{R_{j}+r_{j}}{R_{j}-r_{j}} \int_0^{2\pi} |f(z_j + R_{j} e^{i\theta})|^p \frac{d \theta}{2 \pi} \leq \frac{R_{j}+r_{j}}{R_{j}-r_{j}} I_j(K).
$$
The second inequality is clear, since $v(\zeta)|f(\zeta)| \leq 1$ for each $\zeta \in G$. To justify the first one, first observe that $u(\zeta):=|f(\zeta)|^p, \zeta \in G,$ is a non-negative, continuous and subharmonic function and then apply the Poisson integral inequality to $u$: for $0<r\leq r_{j}$ and all $t \in [0,2\pi]$, we have
$$
 u(z_j+re^{it}) \leq \frac{R_{j}+r}{R_{j}-r} \int_0^{2\pi} u(z_j + R_{j} e^{i\theta}) \frac{d \theta}{2 \pi}.
$$
Accordingly,
$$
 \sup_{z \in \overline{B(z_j,r_{j})}} |f(z)| \leq \( \frac{R_{j}+r_{j}}{R_{j}-r_{j}} I_j(K)\)^{1/p}.
$$
Therefore
$$
 \sup_{z \in K} |f(z)| \leq \max \left\{ \( \frac{R_{j}+r_{j}}{R_{j}-r_{j}} I_j(K)\)^{1/p}, \ j=1,...,J \right\}
$$
for each  $f \in B_v^\infty$, and $B_v^\infty$ is bounded in $(H(G),\tau_{co})$.
\end{proof}
\begin{example}
Let $q>0$ and $v(z)=|{\rm Re}\,z|^q, z \in \D$. Then the normed space $H^\infty_v(\D)$ is complete. (This solves an interesting question posed to us by the referee.)
\par
To see this, we check that the condition of Proposition~\ref{prop-int} is satisfied. Indeed, select $p>0$ such that $0 < pq < 1$; this $p$ will serve for all points $z\in\D$. We will  choose the value of $R_z$ depending on the location of the point $z$ in $\D$ with respect to the zero set of $v$:
\par
(1) If $z$ is not on the imaginary axis, we can pick $R$ sufficiently small so that $\overline{B(z;R)}$ does not intersect this axis and hence the function $v^{-p}$ is bounded both from above and from below on this disk.
\par
(2) If $z=yi$ is any point on the imaginary axis ($0\le |y|<1$), we select $R\in [0,1[$ with $|yi+R e^{i \theta}| < 1$ for each $\theta \in [0, 2 \pi]$. Note that
$$
 v(z+R e^{i \theta})= |\textrm{Re\,}(yi+R e^{i \theta})|^q= R^q |\cos \theta|^q.
$$
Hence
$$
 \int_0^{2\pi} \frac{1}{(v(yi+R e^{i\theta}))^p} d \theta = \frac{1}{R^{pq}} \int_0^{2\pi} \frac{d\theta}{|\cos \theta |^{pq}},
$$
which is finite since the function $\frac{1}{|\cos \theta|^{\beta}}$ is integrable in $[0,2 \pi]$ if $0 < \beta < 1$.
\end{example}
\begin{example}
Let $v$ be a  weight on $\D$ such that there is a strictly increasing sequence $(r_n)_n$ of positive numbers tending to $1$ such that for each $n$ there is $a_n>0$ such that
$v(r_n e^{i\theta}) \geq a_n$ almost everywhere in $[0,2 \pi]$. Then the normed space $H^\infty_v(\D)$ is complete. Indeed, define a radial weight $w$ on $\D$ by setting $w(r_n):=\min(a_n,1), n \in \N,$ and $0$
otherwise. By Corollary \ref{cor4}, $H^\infty_w(\D)$ is complete, so that the closed unit ball $B^{\infty}_w$ of $H^\infty_w(\D)$ is bounded in $(H(\D), \tau_{co})$. Now let $f$ belong to the closed unit ball $B^{\infty}_v$ of $H^\infty_v(\D)$. Then, for each $n \in \N$,
$$
a_n |f(r_n e^{i \theta})| \leq v(r_n e^{i \theta}) |f(r_n e^{i \theta})| \leq 1
$$
almost everywhere in $[0, 2 \pi]$. Since $f$ is continuous, we have $w(z)|f(z)| \leq 1$ for each $z \in \D$, so that $f \in B^{\infty}_w$. Thus $B^{\infty}_v \subset B^{\infty}_w$, and $B^{\infty}_v$ is also bounded in $(H(\D), \tau_{co})$. The conclusion follows from Proposition \ref{prop01}.
\end{example}
\begin{remark}\label{rem_final}
Let $w$ be a continuous weight on $G$ such that $H^\infty_w(\D)$ is a Banach space. Let $v$ be a weight on $G$ such that $\{z \in G | v(z)=w(z) \}$ is dense in $G$
(note that this implies that $E_v$ is not discrete, so that $H^\infty_v(\D)$ is normed). Then $H^\infty_v(\D)$ is actually a Banach space. Indeed, if $f \in B^{\infty}_v$, then $v(z)|f(z)| \leq 1$
for all $z \in G$. The assumption and the continuity of both $f$ and $w$ imply that $w(z)|f(z)| \leq 1$ for all $z \in G$. Thus $f \in B^{\infty}_w$. The conclusion follows again from Proposition \ref{prop01}. In particular, if $w$ is a continuous strictly positive weight on $\D$ and $v$ is defined as follows:
$$
 v(z)=
\begin{cases}
 0, \qquad \ \mathrm{if\ } \mathrm{Re\,} z = 0
\\[\jot]
 w(z), \quad \mathrm{if\ } \mathrm{Re\,} z \neq 0,
\end{cases}
$$
then $H^\infty_v(\D)$ is a Banach space.
\end{remark}
\par\smallskip

\vspace{1cm}
\par
\noindent \textbf{Acknowledgements.} The authors are indebted to the anonymous referee for a patient and careful reading of the manuscript and for making a number of constructive and useful suggestions, as well as for raising some interesting questions. This made it possible to substantially improve the paper. Amongst the suggestions made by the referee, we mention the idea of considering the topology $\tau$ described in Section \ref{sect1}, Proposition~\ref{prop4improved}, Proposition~\ref{proplocalholes} and Remark \ref{rem_final}, as well as their proofs.
\par
The first author was partially supported by MTM2013-43540-P and MTM\-2016-76647-P by MINECO/FEDER-EU and GVA Prometeo II/2013/013. The second author was partially supported by the MINECO/FEDER-EU grant MTM2015-65792-P. Both authors were partially supported by Thematic Research Network MTM2015-69323-REDT, MINECO, Spain.


\bibliographystyle{amsplain}

\end{document}